\documentclass[11pt,a4paper]{amsart}

\usepackage[T1]{fontenc}
\usepackage[utf8]{inputenc}
\usepackage{lmodern}
\usepackage{microtype}
\usepackage{amsmath}
\usepackage{amssymb}
\usepackage{amsthm}
\usepackage[margin=1.15in]{geometry}
\usepackage{xcolor}
\usepackage{tikz}
\usetikzlibrary{shapes.geometric}
\usepackage{hyperref}
\hypersetup{colorlinks=true,linkcolor=blue!45!black,citecolor=blue!45!black,
            urlcolor=blue!45!black,
            pdftitle={Tuza's conjecture for graphs of maximum degree at most seven},
            pdfauthor={Anish Gupta},
            pdfsubject={Triangle packing and covering in bounded-degree graphs},
            pdfkeywords={Tuza's conjecture, triangle packing, triangle covering, bounded-degree graphs},
            pdfdisplaydoctitle=true}

\input{glyphtounicode}
\microtypesetup{protrusion=true,expansion=false}

\theoremstyle{plain}
\newtheorem{theorem}{Theorem}[section]
\newtheorem{lemma}[theorem]{Lemma}
\newtheorem{proposition}[theorem]{Proposition}
\newtheorem{corollary}[theorem]{Corollary}
\newtheorem{conjecture}[theorem]{Conjecture}
\theoremstyle{definition}
\newtheorem{definition}[theorem]{Definition}
\newtheorem{remark}[theorem]{Remark}
\newtheorem{observation}[theorem]{Observation}
\newtheorem{question}[theorem]{Question}

\newcommand{\mad}{\operatorname{mad}}
\newcommand{\WKE}{\mathrm{WKE}}
\newcommand{\cod}{\operatorname{c}}
\newcommand{\N}{N}
\newcommand{\dg}{\operatorname{d}}
\newcommand{\vc}{\beta}

\title[Tuza's conjecture for maximum degree seven]
  {Tuza's conjecture for graphs of maximum degree at most seven}
\author{Anish Gupta}
\address{Independent researcher}
\email{ag2269@cantab.ac.uk}
\urladdr{https://orcid.org/0009-0008-8137-7729}
\date{6 August 2026}
\subjclass[2020]{Primary 05C70; Secondary 05C35, 68V05}
\keywords{Tuza's conjecture, triangle packing, triangle covering,
  bounded-degree graphs, computer-assisted proof}

\begin{document}

\begin{abstract}
Tuza conjectured that every finite simple graph $G$ satisfies
$\tau(G)\le 2\nu(G)$, where $\nu(G)$ is the maximum number of pairwise
edge-disjoint triangles and $\tau(G)$ is the minimum number of edges
whose deletion makes $G$ triangle-free. Puleo proved the conjecture for every
graph of maximum average degree less than~$7$; this covers maximum degree at
most~$6$ but no $7$-regular graph. We prove the conjecture for maximum degree
at most~$7$.

The proof uses Puleo's reducible-set framework. At average degree seven his
discharging step no longer forces a reducible configuration. In a minimal
$7$-regular counterexample every vertex link is a connected seven-vertex graph
outside the weak K\H{o}nig--Egerv\'ary class. An exhaustive census of such
links supplies, at every vertex, an incident edge lying in four, five or six
triangles. We prove that its
endpoints form a reducible pair: codegrees five and six use a
packing-and-cover template and Fano-plane witnesses, while codegree four uses
an explicit catalogue of $1{,}144$ machine-checked local certificates. We
do not provide a human-readable proof of that catalogue; the certificates and
their verifiers accompany the paper. The constant~$2$ is sharp already at
maximum degree three.
\end{abstract}

\maketitle

\section{Introduction}

All graphs in this paper are finite, simple and undirected. A
\emph{triangle packing} of a graph $G$ is a set of triangles of $G$ that are
pairwise \emph{edge}-disjoint; distinct triangles of a packing may share
vertices. We write $\nu(G)$ for the largest size of a triangle packing. A
\emph{triangle transversal} (or \emph{triangle edge cover}) is a set
$Y\subseteq E(G)$ such that $G-Y$ is triangle-free, and $\tau(G)$ is the
smallest size of one. Every transversal meets each member of an
edge-disjoint packing, and those meetings use distinct edges, so
$\nu(G)\le\tau(G)$. Conversely, deleting all three edges of every triangle
in a maximum packing leaves no triangle: a surviving triangle would be
edge-disjoint from the packing and could be added to it. Thus
$\tau(G)\le 3\nu(G)$.

\begin{conjecture}[Tuza \cite{Tuza1981,Tuza1990}]
Every graph $G$ satisfies $\tau(G)\le 2\nu(G)$.
\end{conjecture}

Tuza's original formulation gives the conjecture a second interpretation
\cite{Tuza1981}. Let $\alpha_1(G)$ be the maximum number of edges in a
triangle-free subgraph of $G$, and let $\rho_0(G)$ be the minimum number of
pairwise edge-disjoint triangles and single edges whose union is $E(G)$.
Then $\alpha_1(G)=|E(G)|-\tau(G)$ and
$\rho_0(G)=|E(G)|-2\nu(G)$, so $\rho_0(G)\le\alpha_1(G)$ is equivalent to
Tuza's conjecture.

The conjecture is open. Haxell proved the universal bound
$\tau(G)\le (66/23)\,\nu(G)$ \cite{Haxell1999}. Among the classes for which the
conjecture is known are planar graphs (Tuza \cite{Tuza1990});
triangle-$3$-colourable graphs, a class containing all $4$-colourable graphs
(Aparna Lakshmanan, Bujt\'as and Tuza \cite{AparnaBujtasTuza2012}); and
$K_5$-free chordal graphs (Tuza \cite[Proposition 3(b)]{Tuza1990}).
Krivelevich proved the conjecture for graphs with no subdivision of $K_{3,3}$
\cite{Krivelevich1995}. Botler,
Fernandes and Guti\'errez strengthened the last result to $K_8$-free chordal
graphs by proving the conjecture for graphs of treewidth at most six; they
also proved $\tau\le\tfrac32\nu$ for every planar triangulation other than
$K_4$ \cite{BotlerFG2021}.
Kahn and Park proved that for every edge-probability function $p=p(n)$, the
Erd\H{o}s--R\'enyi random graph $G(n,p)$ satisfies the conjecture
asymptotically almost surely \cite{KahnPark2022}. Most relevant here is the
maximum-average-degree result. Write
\[
  \mad(G)\;=\;\max\Bigl\{\tfrac{2|E(H)|}{|V(H)|}\;:\;
    H\subseteq G,\ V(H)\neq\emptyset\Bigr\}
\]
for the maximum average degree of $G$.
For the empty graph we set $\mad(\emptyset)=0$.

\begin{theorem}[Puleo {\cite[Theorem 1.2]{Puleo2015}}]\label{thm:puleo}
If $\mad(G)<7$, then $\tau(G)\le 2\nu(G)$.
\end{theorem}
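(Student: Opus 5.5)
We would prove the statement by the minimal-counterexample method with reducible sets and discharging. Let $G$ be a counterexample minimising $|V(G)|+|E(G)|$. Then every proper subgraph $H$ satisfies $\mad(H)\le\mad(G)<7$ and hence $\tau(H)\le 2\nu(H)$. The basic tool is a \emph{reducible set}. Call a set $S\subseteq V(G)$ reducible if there are a set $X$ of edges and a packing $\mathcal P$ with the following properties: $\mathcal P$ consists of triangles meeting $S$ and is edge-disjoint from $G-S$; $X$ meets every triangle that has an edge incident to $S$; and $|X|\le 2|\mathcal P|$. Given such $S$, apply minimality to $G'=G-S$ (or to $G$ minus the edges meeting $S$). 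A maximum packing of $G'$ extends by $\mathcal P$, and a minimum cover of $G'$ extends by $X$. This gives $\tau(G)\le\tau(G')+|X|\le 2\nu(G')+2|\mathcal P|\le 2\nu(G)$, a contradiction. So a minimal counterexample contains no reducible set.

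The first step is to collect simple reducible configurations.
\begin{itemize}
\item A vertex $v$ of degree at most $2$ is reducible, or more generally any vertex whose link $G[N(v)]$ has a small edge cover of the triangles through $v$ relative to a large packing. Triangles through $v$ correspond to edges of the link: a matching $M$ in the link gives $|M|$ edge-disjoint triangles at $v$, and a vertex cover $C$ of the link gives the $|C|$ edges $vc$, $c\in C$, covering all triangles at $v$.
\item Hence $v$ is reducible whenever the link has vertex cover number at most twice its matching number. The other triangles meeting edges at $v$ must also be handled, but deleting $v$ covers exactly the triangles containing $v$.
\item Since König--Egerváry-type links (vertex cover $\le 2\cdot$matching holds always, with cover $\le 2\nu$) are easy, we must refine: we want $\beta(\text{link})\le 2\nu'(\text{link})$ where $\nu'$ counts triangles edge-disjoint in $G$. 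A matching in the link gives edge-disjoint triangles, and $\beta\le 2\cdot$(maximum matching) always. So every single vertex is reducible, provided $G-v$ still has $\mad<7$.
\end{itemize}
This last claim seems too strong, so the actual obstruction must lie in the interaction between the cover $X$ and the packing of $G'$. The edges of $\mathcal P$ must avoid $G'$, and that holds since each triangle uses $v$ and a link edge. The link edges, however, lie in $G'$.

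The correct formulation is therefore to delete the link edges used by $\mathcal P$ as well. This is where degree enters. A vertex of degree $d$ is reducible when its link supports a structure in which the packing spends few outside edges. Low degree vertices (degree at most $6$ with suitable link) and pairs of adjacent vertices of small total degree would be shown reducible by a case analysis of the possible links.

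The second step is discharging. Give each vertex charge $\deg(v)-7<0$ on average, since $\mad<7$. Then redistribute charge along edges and triangles so that every vertex not in a reducible configuration ends with nonnegative charge, which is a contradiction. The main obstacle is the catalogue of reducible configurations: determining exactly which links and vertex pairs reduce, and designing discharging rules that cover all vertices of degree $\le 6$ and the neighbourhoods of high-degree vertices. I would first try rules in which each vertex of degree $\ge 8$ sends $\tfrac12$ to low-degree neighbours sharing a triangle.
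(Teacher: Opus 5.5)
\paragraph{Verdict.} There is a genuine gap, and it lies where the content of the theorem lies. The paper does not reprove this statement: it imports it from Puleo, whose argument has the shape you describe (minimal counterexample, reducible sets, discharging). Your proposal does not yet carry out any step of that shape.

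\paragraph{The reducibility step.} Your definition is inconsistent, and the criterion you extract from it is wrong. You require $\mathcal P$ to be edge-disjoint from $G-S$. For $S=\{v\}$, every triangle $vxy$ has its rim $xy$ in $G-v$, so $\mathcal P=\emptyset$, $X=\emptyset$, and only vertices lying in no triangle reduce. Your bullets then use exactly such triangles.

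Deleting the rim edges as well does not repair this. Triangles avoiding $S$ that use a rim edge are no longer in the recursion graph, so its cover need not hit them. The rim edges must be charged to $X$: this is condition (iii) of Definition~\ref{def:reducible}, with recursion on $(G-X)-S$.

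With that accounting, a matching $M$ of the link $H$ gives $|M|$ triangles but already places $M$ in $X$. That leaves only $|M|$ further edges, i.e.\ spokes at a vertex cover $Q$ of $H-M$. So $\{v\}$ reduces when $|Q|\le|M|$, i.e.\ $H\in\WKE$ (Lemma~\ref{lem:puleo44}). Your condition, ``vertex cover number at most twice the matching number'', holds for every graph, as you noticed, so it cannot be the right one. You flag the inconsistency but never redo the count, so the proposal ends with no nontrivial reducible configuration.

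\paragraph{Everything after that is deferred.} To make discharging work you need the following.
\begin{itemize}
\item The edge-set form of reducibility, together with the fact that connected graphs on at most four vertices lie in $\WKE$ (Corollary~\ref{cor:puleo49}). This gives robustness (Lemma~\ref{lem:robust}). In a minimal counterexample every non-isolated vertex then has degree at least $5$, and every vertex of degree $5$ or $6$ has a connected link outside $\WKE$.
\item An analysis of which such links, and which neighbourhoods of such vertices, can occur without a reducible set.
\item Actual discharging rules, with a proof that every vertex finishes with nonnegative charge. This is the role of Puleo's Lemmas~2.7 and~2.8, which the paper cites but does not reproduce.
\end{itemize}
Your proposal supplies none of these. ``I would first try rules in which\dots'' is a plan, not an argument, so as written you have an outline of the method rather than a proof of the statement.
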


The hypothesis of Theorem~\ref{thm:puleo} is strict, and the strictness has a
sharp consequence for bounded-degree graphs. If $\Delta(G)\le 6$ then every
subgraph has average degree at most $6$, so $\mad(G)\le 6<7$ and
Theorem~\ref{thm:puleo} applies. If, on the other hand, $G$ is $7$-regular,
then $\mad(G)=7$ and Theorem~\ref{thm:puleo} says nothing at all about $G$.
The $7$-regular graphs are therefore exactly what separates ``maximum degree
at most six'', which is covered, from ``maximum degree at most seven'', which
is not. Our main theorem closes that gap.

There is an apparent one-edge shortcut, but it misses the conjectured bound by
exactly the amount that matters. If $G$ is connected and $7$-regular and
$e\in E(G)$, then $G-e$ is a proper subgraph, so
$\mad(G-e)<7$ by Lemma~\ref{lem:madhinge}(b). Puleo's theorem gives
$\tau(G-e)\le2\nu(G-e)$, while
$\tau(G)\le\tau(G-e)+1$ and $\nu(G-e)\le\nu(G)$. Thus this shortcut yields
only $\tau(G)\le2\nu(G)+1$. Recovering that last unit is the point of the
reducible-pair argument below.

\begin{theorem}\label{thm:main}
Every graph $G$ with $\Delta(G)\le 7$ satisfies $\tau(G)\le 2\nu(G)$.
\end{theorem}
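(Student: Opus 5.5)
We argue by minimal counterexample. Let $G$ be a counterexample to Theorem~\ref{thm:main} with $|V(G)|+|E(G)|$ as small as possible. Minimality gives three basic facts. First, $G$ is connected, since both $\tau$ and $\nu$ are additive over components. Second, every edge of $G$ lies in a triangle, since an edge in no triangle can be deleted without changing $\tau$ or $\nu$. Third, $\mad(G)\ge 7$ by Theorem~\ref{thm:puleo}, and with $\Delta(G)\le 7$ this forces some subgraph $H$ with average degree $7$, hence $H$ is $7$-regular. If $H\neq G$, then connectivity of $G$ gives an edge leaving $V(H)$ or an edge of $G$ outside $H$, contradicting $\Delta\le 7$. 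So $G$ itself is connected and $7$-regular. As noted after Theorem~\ref{thm:puleo}, the one-edge shortcut only gives $\tau(G)\le 2\nu(G)+1$. The goal is therefore to find a \emph{reducible pair}.

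A reducible set in Puleo's sense is a set $S\subseteq V(G)$ together with an edge set $X$ and packing $\mathcal T$ of triangles meeting $S$ such that $|X|\le 2|\mathcal T|$. In addition, $X$ must meet every triangle through $S$ in a way that lets any packing of $G-S$ be extended by $\mathcal T$. Minimality applied to $G-S$ then yields a contradiction. We use only the special case $S=\{u,v\}$ for an edge $uv$. The local structure is the link $L(x)=G[N(x)]$, which has seven vertices. Triangles at $x$ correspond to edges of $L(x)$, and the codegree of $xy$ equals $\deg_{L(x)}(y)$.

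The steps run in order. (1) Show that in a minimal counterexample each link is connected and not weakly K\H{o}nig--Egerv\'ary. Otherwise the single vertex $x$ is reducible: when a link satisfies the WKE property, its matching and vertex-cover numbers yield triangles at $x$ and a cover of them with ratio at most $2$. The connectivity claim follows because a disconnected link lets one split $x$ and reduce. (2) Run an exhaustive computer census of connected non-WKE graphs on seven vertices. The aim is to show that each has a vertex of degree $4$, $5$ or $6$. This gives every vertex $x$ an incident edge $xy$ whose codegree $c$ lies in $\{4,5,6\}$. (3) Prove that $\{x,y\}$ is reducible in each case. For $c\in\{5,6\}$, choose a packing of triangles through $x$ and $y$ and the edges of their joint neighbourhood together with a transversal of size at most twice the packing. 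Because many common neighbours exist, the two links overlap heavily, and short Fano-plane-like arrangements of edge-disjoint triangles provide the witnesses. (4) For $c=4$, enumerate the possible local configurations, namely the induced structure on $N[x]\cup N[y]$ up to isomorphism, subject to $7$-regularity and the non-WKE constraint on both links. For each configuration, exhibit by computer a certificate consisting of a packing and a transversal. The transversal must be valid whatever edges $G-\{x,y\}$ contains.

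The main obstacle is step (4). The configuration space for codegree $4$ is large and does not collapse by symmetry into a few hand cases. I would first try a uniform template: take $2$ triangles on $xy$-related edges and a cover of size $4$ that uses the seven-minus-codegree private neighbours. If that fails on some configurations, I would fall back on a machine-generated, independently checkable catalogue of certificates, each verified by a small program.
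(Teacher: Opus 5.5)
Your plan follows the paper's proof essentially step for step. You reduce to a connected $7$-regular counterexample via the equality case of Puleo's $\mad<7$ theorem, and use Puleo's singleton lemma, with a small-component or vertex-splitting argument for connectivity, to make every link connected and non-$\WKE$. You then take an edge of codegree $4$, $5$ or $6$ from an exhaustive census of seven-vertex links and reduce its endpoints as a pair. At codegrees five and six the complete cores get Fano-plane packings; at codegree four you use a machine-checked certificate catalogue valid for every choice of the remaining ambient edges.

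When you write it out, use Puleo's definition of reducibility exactly rather than your paraphrase. The packing may contain triangles disjoint from $\{x,y\}$, provided all their edges lie in $X$. The paper's Fano witnesses (two of whose seven triangles lie inside $C$) and $217$ of its codegree-four certificates use exactly such triangles, so $\mathcal T$ must not be restricted to triangles meeting $\{x,y\}$.
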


Theorem~\ref{thm:main} assumes only the degree bound; $G$ may be disconnected
and its order is unrestricted. It is deduced in Section~\ref{sec:regular}
from the $7$-regular case.

\begin{theorem}\label{thm:regular}
Every $7$-regular graph $G$ satisfies $\tau(G)\le 2\nu(G)$.
\end{theorem}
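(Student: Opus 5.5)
We argue by minimal counterexample inside Puleo's reducible-set framework. Suppose Theorem~\ref{thm:regular} fails, and let $G$ be a $7$-regular counterexample with $|V(G)|+|E(G)|$ minimal. Each component of $G$ is $7$-regular, and both $\tau$ and $\nu$ are additive over components, so $G$ is connected. Every proper subgraph $H$ of $G$ satisfies $\tau(H)\le 2\nu(H)$: either $\mad(H)<7$ by Lemma~\ref{lem:madhinge}(b), so Theorem~\ref{thm:puleo} applies, or $H$ is $7$-regular and smaller, so minimality applies.

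We use Puleo's notion of reducibility for a vertex set $S$. Suppose there is a triangle packing $\mathcal{P}$ and an edge set $X$ such that:
\begin{itemize}
\item every triangle of $\mathcal{P}$ uses an edge incident to $S$;
\item $X$ meets every triangle of $G$ that has an edge incident to $S$, and these edges stay covered in $G-S$;
\item $|X|\le 2|\mathcal{P}|$.
\end{itemize}
Then combining $\mathcal{P}$ and $X$ with an optimal packing and an optimal cover of $G-S$ gives $\tau(G)\le 2\nu(G)$. So $G$ contains no reducible set, and it suffices to exhibit one.

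\textbf{Step 1: vertex links.} For a vertex $v$, let $L_v=G[N(v)]$ be its link, a $7$-vertex graph. The triangles at $v$ correspond to the edges of $L_v$. Taking $S=\{v\}$, we get $|\mathcal{P}|=\nu'(L_v)$ (the matching number) and $|X|\le\vc(L_v)+(\text{number of isolated components})$. From this we aim to show two things. First, $L_v$ is connected, since otherwise splitting the star of $v$ yields a reduction. Second, $L_v$ is not weakly K\H{o}nig--Egerv\'ary, meaning the vertex cover number is not at most twice the matching number in the sense Puleo uses; otherwise $\{v\}$ is reducible. This is the same first-level reduction Puleo makes.

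\textbf{Step 2: census.} Next we enumerate by computer all connected graphs on $7$ vertices that lie outside the WKE class. From the enumeration we verify that each such graph has a vertex of degree $3$, $4$ or $5$ in the link. A link vertex $u$ of degree $d$ corresponds to an edge $vu$ of codegree $d$, that is, an edge lying in exactly $d$ triangles. So every vertex $v$ has an incident edge $vu$ of codegree in $\{4,5,6\}$. (If the census gives degree $\ge 6$, the link has a dominating vertex, and we handle that case separately as giving a reducible configuration.)

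\textbf{Step 3: pair reduction.} Fix such an edge $uv$ and take $S=\{u,v\}$. Both $L_u$ and $L_v$ are non-WKE and connected, and they share the $c$ common neighbours, where $c=\mathrm{codeg}(uv)$. We build a packing that uses the triangles through $uv$ together with matchings in $L_u-v$ and $L_v-u$. We then build a cover consisting of $uv$ plus vertex covers of the two residual links, and compare the two counts; each vertex costs $2\nu$ edges.

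For $c=6$ and $c=5$, the common neighbourhood is large. The only obstruction to the inequality is a configuration in which the relevant triangles form a Fano-plane-like pattern, where every matching choice collides. In those configurations we find explicit witnesses: alternative packings that use triangles through the remaining neighbours.

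The case $c=4$ is the main obstacle. There the two links overlap only moderately, and the count fails by one unit in many configurations. The plan is to enumerate all pairs of non-WKE links glued along a $4$-set, consistent with $7$-regularity, up to isomorphism. For each pair we search by computer for a certificate $(\mathcal{P},X)$ on $S=\{u,v\}$, or if necessary on a slightly enlarged $S$ that includes one extra neighbour. Each certificate is checked mechanically. I expect this case to produce a large catalogue (on the order of a thousand certificates) with no uniform human-readable argument.

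Once some pair is shown reducible, $G$ is not a counterexample, which contradicts minimality and proves the theorem.
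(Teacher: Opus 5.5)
Your overall architecture is the paper's: a connected minimal counterexample, Puleo's reducible sets, connected non-$\WKE$ links, a census forcing a high-codegree edge, and pair reductions at codegrees $4,5,6$ with Fano-type witnesses and a machine-checked catalogue at codegree four. But several load-bearing steps are misstated.

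\textbf{Step 2 does not give what Step 3 needs.} A link vertex of degree $3$, $4$ or $5$ is an edge of codegree $3$, $4$ or $5$, and a codegree-$3$ edge (where $|A|=|B|=3$) is never treated. The census actually required, and verified in Lemma~\ref{lem:B1}, is that every \emph{connected} non-$\WKE$ seven-vertex graph has a vertex of degree at least $4$. The bound $\cod(uv)\le 6$ is automatic, since $u$ has only six neighbours other than $v$. So a dominating link vertex is just the codegree-six case and needs no separate treatment.

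\textbf{Connectivity is not justified.} The census is false without it (Remark~\ref{rem:connectivity}). Your connectivity step (``splitting the star of $v$'') is not available in the vertex-set framework you set up. It needs Puleo's edge-set reducibility (Definition~\ref{def:redgeset}, Lemma~\ref{lem:puleo43}) applied to $E_0=\{vw:w\in V(G_0)\}$, where $G_0$ is a link component of order at most $4$ and hence $\WKE$ by Corollary~\ref{cor:puleo49}. This gives robustness, and two components of order at least $5$ cannot fit in seven vertices.

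\textbf{Your gloss on $\WKE$ is wrong.} Every graph satisfies $\vc(H)\le 2\nu'(H)$, because the ends of a maximal matching cover all edges. Under your reading every link would be $\WKE$, and Step 1 would make every singleton reducible. That already fails in $K_8$ (Proposition~\ref{prop:k8}). The correct condition is a matching $M$ and a set $Q$ with $|Q|\le|M|$ covering $H-M$. This is exactly what makes the packing $\{vxy:xy\in M\}$ with $X=M\cup\{vx:x\in Q\}$ satisfy $|X|\le 2|\mathcal P|$.

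\textbf{Your reducibility lemma is misstated in both directions.} First, the recursion must be on $(G-X)-S$, and $X$ must contain every edge of a triangle of $\mathcal P$ with both ends outside $S$ (condition (iii) of Definition~\ref{def:reducible}). With $G-S$ as the residual graph, an optimal packing of it may reuse the rim edge of a triangle of $\mathcal P$, and the union is no longer a packing.

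Second, you require every triangle of $\mathcal P$ to meet $S$, which Puleo does not, and this breaks your plan at codegree six. If $G[C]\cong K_6$ then $L\cong K_8$. A short count shows that no witness for $\{u,v\}$ all of whose triangles meet $\{u,v\}$ exists; when $uv\in X$ this is exactly the template failure of Proposition~\ref{prop:template6}. The paper's Fano witness (Lemma~\ref{lem:fano6}) needs the two triangles $c_0c_1c_4$ and $c_0c_2c_3$ inside $C$. You can either drop the restriction, or note that in this configuration $\{u,v\}\cup C$ is a $K_8$ component, so $G=K_8$, which satisfies the bound.

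\textbf{Codegree five.} The template failures are not all ``Fano-like''. Proposition~\ref{prop:template5} also lists the edgeless core, $K_{1,4}$, $K_3\sqcup K_2$ and the bowtie. The paper removes these through the link hypothesis (Proposition~\ref{prop:exceptional5}) rather than by new witnesses; only $K_5$ needs the Fano packing of Lemma~\ref{lem:fano5}. Your sketch should either do the same or actually exhibit pair witnesses for those cores.
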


The constant $2$ cannot be improved inside the class of
Theorem~\ref{thm:main}; the extremal examples are already present at maximum
degree three.

\begin{proposition}\label{prop:sharpintro}
$K_4$ has $(\nu,\tau)=(1,2)$ and $\Delta=3$; $K_5$ has $(\nu,\tau)=(2,4)$;
and for every $k\ge 1$ there are connected graphs $G_6,G_7$ with
$\Delta(G_6)\le 6$, $\Delta(G_7)=7$, and
$\nu(G_6)=\nu(G_7)=k$, $\tau(G_6)=\tau(G_7)=2k$. Consequently no
inequality $\tau\le c\,\nu$ with $c<2$, and no strict inequality
$\tau<2\nu$, holds for all graphs of maximum degree at most $7$.
\end{proposition}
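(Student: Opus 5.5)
We verify the two small cases directly and then obtain the families $G_6,G_7$ by gluing copies of these extremal graphs at vertices.

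For $K_4$: $\Delta=3$, and any two triangles of $K_4$ share an edge, so $\nu=1$. Deleting one edge $uv$ leaves the triangle on the other two vertices together with $u$ (indeed $K_4-e$ still contains two triangles), so $\tau\ge2$. Deleting a perfect matching leaves $C_4$, so $\tau=2$.

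For $K_5$: $\nu\le\lfloor 10/3\rfloor=3$. Three edge-disjoint triangles would use $9$ of the $10$ edges. At each vertex the used edges pair up inside triangles, so every vertex would have even used-degree. The degrees in $K_5$ are all $4$, so the one unused edge would create two vertices of odd used-degree $3$, a contradiction. Hence $\nu\le2$, and the triangles $123$, $145$ show $\nu=2$. For $\tau$, the largest triangle-free subgraph of $K_5$ has $\lfloor 25/4\rfloor=6$ edges by Mantel, so $\tau=10-6=4$.

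For general $k$, I would form vertex-disjoint unions of the small examples and then connect them along a path of single shared vertices.
\begin{itemize}
\item \emph{Construction of $G_6$.} Take $k$ copies of $K_4$, with copy $i$ on vertices $a_i,b_i,c_i,d_i$, and identify $d_i$ with $a_{i+1}$. Identified vertices have degree $6$ and all others degree $3$, so $\Delta\le6$ and the graph is connected.
\item \emph{Construction of $G_7$.} Take a chain of one $K_5$ and $k-2$ copies of $K_4$ for $k\ge2$, glued the same way. If the shared vertex lies in the $K_5$ it has degree $4+3=7$, so $\Delta=7$.
\end{itemize}
For $k=1$ a single $K_5$ does not work, since it has $\nu=2$. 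Instead I would take $K_4$ and attach the gadget so that one vertex reaches degree $7$ without new triangles, for instance by attaching a tree: a vertex of $K_4$ gets four pendant edges, giving degree $7$. Pendant edges are harmless, so this handles every $k$. Indeed, the cleanest uniform choice for $G_7$ is simply $G_6$ for $k$ copies of $K_4$ with pendant edges added at an endpoint vertex of degree $3$ to bring it to degree $7$.

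The key step is additivity. Identifying single vertices (a $1$-sum) creates no new triangles, because every triangle lies within one block: a triangle through a cut vertex has its other two vertices adjacent, hence in the same block. Triangles of distinct blocks share no edge. Therefore both $\nu$ and $\tau$ are additive over blocks, giving $\nu=k$ and $\tau=2k$. Pendant edges lie in no triangle and change neither parameter.

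Finally, if $\tau\le c\nu$ held with $c<2$, then $2k\le ck$ would fail for $k=1$, and $\tau<2\nu$ fails for $K_4$. The main (minor) care point is confirming that the blocks of the glued graph are exactly the original cliques, so that additivity applies.
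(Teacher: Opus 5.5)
Your proposal is correct and follows essentially the same route as the paper: the parity argument and Mantel's theorem for $K_5$, a path of $K_4$ blocks with blockwise additivity of $\nu$ and $\tau$ for $G_6$, and pendant edges to reach degree seven for $G_7$. Adding four pendant edges at a degree-$3$ vertex for every $k$ is a harmless variant of the paper's single pendant edge at a degree-$6$ cut vertex when $k\ge2$. The one slip is your first $G_7$ option: for $k=2$ it is $K_5$ alone, with $\Delta=4$, so use the pendant-edge construction you end with.
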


Proposition~\ref{prop:sharpintro} is proved in Section~\ref{sec:sharp}.

\subsection{The obstruction at degree seven}\label{subsec:obstruction}

Puleo's proof of Theorem~\ref{thm:puleo} is a discharging argument built on a
notion of \emph{reducible set} (Definition~\ref{def:reducible} below): a
vertex set $V_0$ is reducible if a bounded amount of local packing and
covering data lets one delete $V_0$ together with the edges it interferes with
and then recurse. Consequently a minimal counterexample has no reducible set;
our task is to force one from the local structure at degree seven.

Puleo's singleton reduction itself has no degree hypothesis. His
Lemma~4.4 \cite{Puleo2015}, restated as Lemma~\ref{lem:puleo44}, makes
$\{v\}$ reducible whenever the link $G[\N(v)]$ is weak
K\H{o}nig--Egerv\'ary ($\WKE$). What fails at average degree seven is the discharging step that
forces a reducible configuration from the strict inequality
$\mad(G)<7$ \cite[Lemmas 2.7, 2.8]{Puleo2015}. Reducible pairs also appear in
the literature. Botler, Fernandes and Guti\'errez
\cite[Lemma 3.3]{BotlerFG2021} show that, in an irreducible robust graph, a
pair $\{x,y\}$ with $\dg(x),\dg(y)\le6$ and
$|\N(x)\cup\N(y)|\le7$ can survive only in a specific configuration: both
degrees are five, the common neighbourhood has order three, and both links
are $K_5$. Such a pair is necessarily non-adjacent
\cite[Lemma 3.2(b)]{BotlerFG2021}. Their configuration therefore differs in
two ways from ours: its vertices have degree at most six, whereas ours have
degree seven, and they reduce a non-edge, whereas we reduce the endpoints of
an edge.

A hypothetical counterexample nevertheless has strong links: each is a
connected seven-vertex graph outside $\WKE$
(Lemma~\ref{lem:linkstructure}). That condition alone does not make the
central vertex reducible. In $K_8$ every link is $K_7$, hence connected and
outside $\WKE$, but no singleton of $K_8$ is reducible
(Proposition~\ref{prop:k8}). Thus the link condition derived from Puleo's
singleton lemma cannot by itself yield another singleton reduction. We use it
instead to find an edge of high triangle codegree and reduce the two degree-$7$
endpoints of that edge.

\subsection{Outline of the proof}\label{subsec:method}

We reduce at an edge of high triangle codegree, where the
\emph{triangle codegree} $\cod(uv)=|\N(u)\cap\N(v)|$ of an edge is the number
of triangles containing it. Suppose
Theorem~\ref{thm:regular} fails and let $G$ be a connected $7$-regular graph
with $\tau(G)>2\nu(G)$. A short argument about maximum average degree
(Lemma~\ref{lem:madhinge}) shows that every \emph{proper} subgraph of $G$
falls under Theorem~\ref{thm:puleo}, so $G$ is minimal in the sense the
framework needs and has no reducible set at all. Then:

\begin{enumerate}
  \item every link $G[\N(v)]$ is connected, has seven vertices, and is
    \emph{not} in $\WKE$ (Lemma~\ref{lem:linkstructure});
  \item every connected non-$\WKE$ graph on seven vertices has at least three
    vertices of degree at least four (Lemma~\ref{lem:B1}), so some edge $uv$
    of $G$ satisfies $\cod(uv)\ge 4$, while $7$-regularity forces
    $\cod(uv)\le 6$ (Corollary~\ref{cor:trichotomy});
  \item for every edge $uv$ with $\cod(uv)\in\{4,5,6\}$ arising in such a $G$,
    the \emph{pair} $\{u,v\}$ is reducible
    (Propositions~\ref{prop:codeg6}, \ref{prop:codeg5} and
    \ref{prop:codeg4}).
\end{enumerate}

Step (3) contradicts the absence of reducible sets, and
Theorem~\ref{thm:regular} follows.

The last step is finite because a codegree-$c$ edge in a $7$-regular graph has
a fixed neighbourhood pattern. Writing $C=\N(u)\cap\N(v)$ and letting $A,B$
be the exclusive neighbours of $u,v$, one has $|C|=c$ and
$|A|=|B|=6-c$ (Figure~\ref{fig:local}). All triangles through either endpoint
lie in the resulting local graph, so a local certificate transfers to $G$.
A packing-and-cover template settles codegrees six and five apart from
complete cores, which admit Fano-plane witnesses. At codegree four, $1{,}144$
admissible local isomorphism classes remain and are handled by explicit
certificates.

These are among four finite statements verified exhaustively: the
seven-vertex link census, the two template classifications (including the
codegree-five exceptional list), and the codegree-four catalogue. Everything
else is proved in the text. Appendix~\ref{app:programs} describes the checkers
and the precise proof/computation boundary.

\subsection{Notation}

$\N(v)$ is the open neighbourhood of $v$ and $\N[v]=\N(v)\cup\{v\}$;
$\dg(v)=|\N(v)|$. For $W\subseteq V(G)$, $G[W]$ is the induced subgraph. The
\emph{link} of $v$ is $G[\N(v)]$. For an edge $uv\in E(G)$ we write
\[
  \cod(uv)\;=\;|\N(u)\cap \N(v)|
\]
for its \emph{triangle codegree}, the number of triangles of $G$ containing
$uv$. For an edge set $X$, $G-X$ is $G$ with the edges of $X$ deleted; for a
vertex set $V_0$, $G-V_0$ is $G$ with the vertices of $V_0$ and all incident
edges deleted. $\alpha(H)$ and $\vc(H)$ denote the independence number and the
vertex cover number of $H$, so that $\alpha(H)+\vc(H)=|V(H)|$. A subgraph
$J\subseteq G$ is \emph{proper} if $(V(J),E(J))\neq(V(G),E(G))$.

\section{Puleo's reducibility framework}\label{sec:prelim}

We recall the notions and results of \cite{Puleo2015} that we use, with their
exact hypotheses. They are used as a black box; in particular we do not use
Puleo's discharging lemmas (\cite[Lemmas 2.7, 2.8]{Puleo2015}), whose strict
average-degree forcing argument does not supply a configuration at equality.

\begin{definition}[{\cite[Definition 2.1]{Puleo2015}}]\label{def:reducible}
When $S$ is a set of triangles, an \emph{$S$-edge} is an edge of some triangle
in $S$. A nonempty set $V_0\subseteq V(G)$ is \emph{reducible} if there exist
a set $S$ of edge-disjoint triangles in $G$ and a set $X$ of edges of $G$ such
that:
\begin{enumerate}
  \item[(i)] $|X|\le 2|S|$;
  \item[(ii)] $G-X$ has no triangle containing a vertex of $V_0$; and
  \item[(iii)] $X$ contains every $S$-edge whose endpoints are both outside
    $V_0$.
\end{enumerate}
We then say that $V_0$ is reducible \emph{using $S$ and $X$}.
\end{definition}

Condition (iii) is easy to overlook but essential: it prevents the recursion
from re-using an edge already spent by $S$.

\begin{lemma}[{\cite[Lemma 2.2]{Puleo2015}}]\label{lem:puleo22}
Let $G$ be a graph and let $V_0\subseteq V(G)$ be reducible using $S$ and $X$.
Let $G'=(G-X)-V_0$. If $\tau(G')\le 2\nu(G')$, then $\tau(G)\le 2\nu(G)$.
\end{lemma}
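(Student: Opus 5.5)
The plan is to build a small transversal and a large packing for $G$ from those of $G'$. Set $G'=(G-X)-V_0$. Take a maximum triangle packing $P'$ of $G'$ and a minimum transversal $Y'$ of $G'$, so that $|Y'|=\tau(G')\le 2\nu(G')=2|P'|$. We then take $P=P'\cup S$ as a candidate packing in $G$ and $Y=Y'\cup X$ as a candidate transversal in $G$. If both candidates are valid, then
\[
\tau(G)\le |Y'|+|X|\le 2|P'|+2|S| = 2|P'\cup S|\le 2\nu(G),
\]
where the middle inequality uses hypothesis (i). The last equality needs $P'\cap S=\emptyset$, and the final inequality needs $P$ to be edge-disjoint. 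So the whole proof reduces to two checks.

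First we show that $Y$ is a transversal of $G$. Let $T$ be a triangle of $G$ with no edge in $X$, so $T$ is a triangle of $G-X$. By condition (ii), $T$ contains no vertex of $V_0$. Hence $T$ survives in $G'=(G-X)-V_0$, and therefore $T$ meets $Y'$. It follows that every triangle of $G$ meets $X\cup Y'$.

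Second, and this is the step where condition (iii) is essential, we show that $P'\cup S$ is an edge-disjoint packing with $P'\cap S=\emptyset$. Members of $S$ are pairwise edge-disjoint by assumption, and so are members of $P'$. Now suppose some $T\in S$ and $T'\in P'$ share an edge $e$. Then $e\in E(G')$, so neither endpoint of $e$ lies in $V_0$, and $e\notin X$. But $e$ is an $S$-edge with both endpoints outside $V_0$, so (iii) forces $e\in X$, a contradiction. The same argument shows that no triangle lies in both $S$ and $P'$, since such a triangle would share all of its edges with itself. Therefore $|P'\cup S|=|P'|+|S|$, which completes the chain of inequalities above.

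There is no real obstacle here. The only subtle point is remembering that the vertices of $V_0$ are deleted, so the $S$-edges touching $V_0$ automatically avoid $G'$, while (iii) covers exactly the remaining $S$-edges.
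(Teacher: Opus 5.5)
Your proof is correct, and it is the standard argument. The paper quotes this lemma from Puleo without reproving it, and Puleo's proof is the same as yours: $X\cup Y'$ is a transversal of $G$ by condition (ii), $S\cup P'$ is an edge-disjoint packing by condition (iii), and then $\tau(G)\le|X|+|Y'|\le 2|S|+2|P'|\le 2\nu(G)$.
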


\begin{definition}[{\cite[Definition 4.2]{Puleo2015}}]\label{def:redgeset}
A nonempty edge set $E_0\subseteq E(G)$ is \emph{reducible} if there exist a
set $S$ of edge-disjoint triangles and a set $X$ of edges of $G$ such that
(i) $|X|\le 2|S|$; (ii) $G-X$ has no triangle containing an edge of $E_0$;
and (iii) $X$ contains every $S$-edge that is not in $E_0$.
\end{definition}

\begin{lemma}[{\cite[Lemma 4.3]{Puleo2015}}]\label{lem:puleo43}
Let $G$ be a graph and let $E_0\subseteq E(G)$ be reducible using $S$ and $X$.
Let $G'=(G-X)-E_0$. If $\tau(G')\le 2\nu(G')$, then $\tau(G)\le 2\nu(G)$.
\end{lemma}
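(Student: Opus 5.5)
The statement to prove is Lemma~\ref{lem:puleo43}: if $E_0$ is reducible using $S$ and $X$, and $G'=(G-X)-E_0$ satisfies $\tau(G')\le 2\nu(G')$, then $\tau(G)\le 2\nu(G)$. The plan is to build a transversal of $G$ and a packing of $G$ from optimal ones in $G'$, and compare sizes. Concretely, I will take a minimum transversal $Y'$ of $G'$ and a maximum packing $P'$ of $G'$. The candidates are
\[
  Y \;=\; Y'\cup X\cup E_0
  \qquad\text{and}\qquad
  P \;=\; P'\cup S .
\]
This first choice of $Y$ is too large: it includes all of $E_0$. It will need to be refined below.

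First I check that $P'\cup S$ is an edge-disjoint triangle packing of $G$. Every triangle of $P'$ lies in $G'$, so it uses no edge of $X$ and no edge of $E_0$. By condition (iii), each $S$-edge lies either in $E_0$ or in $X$. Hence no triangle of $P'$ shares an edge with a triangle of $S$. Triangles within $S$ are already edge-disjoint, and triangles within $P'$ are disjoint by choice. So $\nu(G)\ge \nu(G')+|S|$.

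Next I produce a transversal of $G$ of size at most $\tau(G')+|X|$; the edges of $E_0$ should not need to be deleted. I claim $Y'\cup X$ is a transversal of $G$. Take any triangle $T$ of $G$ that avoids $X$. If $T$ contains an edge of $E_0$, this contradicts condition (ii), since $G-X$ has no triangle through an edge of $E_0$. So $T$ avoids $E_0$, which means $T$ is a triangle of $G'$ and therefore meets $Y'$.

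Combining with condition (i), $|X|\le 2|S|$, gives
\[
  \tau(G)\le \tau(G')+|X|\le 2\nu(G')+2|S|\le 2\nu(G).
\]

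The only subtle step is noticing that $E_0$ need not be added to the cover; condition (ii) is exactly what makes this possible. Likewise, condition (iii) is what guarantees disjointness of the combined packing. Apart from this, the argument is routine bookkeeping.
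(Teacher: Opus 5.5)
Your proof is correct. The paper does not reprove this lemma but cites Puleo's Lemma~4.3, whose argument is the same as his Lemma~2.2 and matches yours: combine a maximum packing of $G'$ with $S$ using condition~(iii), combine a minimum transversal of $G'$ with $X$ using condition~(ii), and then apply~(i). The discarded first candidate $Y'\cup X\cup E_0$ plays no role, so you could drop it from the write-up.
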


\begin{definition}[{\cite[Definition 4.1]{Puleo2015}}]\label{def:wke}
A graph $H$ is a \emph{weak K\H{o}nig--Egerv\'ary} graph if $H$ has a matching
$M$ and a vertex set $Q\subseteq V(H)$ such that $|Q|\le|M|$ and $Q$ is a
vertex cover in $H-M$. We write $\WKE$ for the class of such graphs, and say
that the pair $(M,Q)$ \emph{witnesses} $H\in\WKE$.
\end{definition}

Note that $M$ and $Q$ range over \emph{all} matchings and \emph{all} vertex
sets of the stated size; in particular $M=\emptyset$, $Q=\emptyset$ witnesses
that every edgeless graph lies in $\WKE$. Every K\H{o}nig--Egerv\'ary graph,
hence every bipartite graph, lies in $\WKE$.

\begin{lemma}[{\cite[Lemma 4.4]{Puleo2015}}]\label{lem:puleo44}
Let $v\in V(G)$ and let $G_0$ be a nonempty union of components of
$G[\N(v)]$. If $G_0\in\WKE$, then $G$ has a reducible set of edges. Also, if
$G[\N(v)]\in\WKE$, then $\{v\}$ is reducible.
\end{lemma}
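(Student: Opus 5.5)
The plan is a direct construction of $S$ and $X$ from a $\WKE$ witness: the triangles through $v$ are exactly the triples $vab$ with $ab$ an edge of the link, and that is all we need. I would prove the second statement first and then adapt it to the edge version.

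\emph{The singleton case.} Suppose $H=G[\N(v)]$ lies in $\WKE$, witnessed by a matching $M$ and a vertex set $Q$ with $|Q|\le|M|$, where $Q$ covers $H-M$. Put
\[
S=\{vxy : xy\in M\},\qquad X=M\cup\{vq : q\in Q\}.
\]
\begin{enumerate}
  \item[(a)] Every $vxy\in S$ is a triangle of $G$, because $xy$ is an edge of the link. The triangles in $S$ are pairwise edge-disjoint: two of them could share only an edge at $v$, which would force two matching edges to share an endpoint.
  \item[(b)] For condition~(i), $|X|\le|M|+|Q|\le 2|M|=2|S|$.
  \item[(c)] For condition~(ii), any triangle of $G$ through $v$ has the form $vab$ with $ab\in E(H)$. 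If $ab\in M$, then $ab\in X$. Otherwise $ab$ is an edge of $H-M$, so $a\in Q$ or $b\in Q$, and then $va\in X$ or $vb\in X$.
  \item[(d)] For condition~(iii), the $S$-edges with both endpoints outside $\{v\}$ are exactly the edges of $M$, and $M\subseteq X$.
\end{enumerate}
This shows that $\{v\}$ is reducible using $S$ and $X$. If $M=\emptyset$, then $Q=\emptyset$ and $H$ is edgeless; in that case $S=X=\emptyset$ works trivially.

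\emph{The edge-set case.} Now let $G_0\in\WKE$ be a nonempty union of components of $G[\N(v)]$, witnessed by $(M,Q)$ inside $G_0$. Take $E_0=\{vx : x\in V(G_0)\}$, which is nonempty, and define $S$ and $X$ by the same formulas.
\begin{enumerate}
  \item[(a)] Condition~(i) holds exactly as before.
  \item[(b)] For condition~(ii), a triangle containing an edge $vx\in E_0$ is $vxy$ with $xy$ an edge of the link. Since $G_0$ is a union of components, $y\in V(G_0)$ and $xy\in E(G_0)$. The same case split on whether $xy\in M$ then shows the triangle is destroyed in $G-X$.
  \item[(c)] For condition~(iii), the $S$-edges are the edges of $M$ together with the edges $vx,vy$ for $xy\in M$. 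The edges $vx,vy$ lie in $E_0$, since $M\subseteq E(G_0)$. So the $S$-edges outside $E_0$ are exactly the edges of $M$, and these lie in $X$.
\end{enumerate}

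I expect no real obstacle. The only points needing care are the bookkeeping in condition~(iii) and the component-closure step in the edge version, which guarantees that the third vertex of each relevant triangle stays inside $G_0$.
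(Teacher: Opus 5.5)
Your proof is correct and is essentially Puleo's own argument, which the paper cites as a black box rather than reproducing. From a $\WKE$ witness $(M,Q)$ you take $S=\{vxy : xy\in M\}$ and $X=M\cup\{vq : q\in Q\}$, with the reducible edge set $E_0=\{vx : x\in V(G_0)\}$, which is exactly the set the paper records from Puleo's proof.
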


For the first claim, Puleo's proof exhibits the reducible edge set explicitly
as $E_0=\{vw : w\in V(G_0)\}$.

\begin{corollary}[{\cite[Corollary 4.9]{Puleo2015}}]\label{cor:puleo49}
If $H$ is connected and $|V(H)|\le 4$, then $H\in\WKE$.
\end{corollary}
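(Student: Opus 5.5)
The plan is to exhibit a witnessing pair $(M,Q)$ directly, case by case on $n=|V(H)|\le 4$, using the definition of $\WKE$ (Definition~\ref{def:wke}). No earlier lemma is needed beyond that definition and the remark that bipartite graphs lie in $\WKE$. That remark follows because, for a bipartite graph, a maximum matching $M$ and a K\H{o}nig vertex cover $Q$ with $|Q|=|M|$ already work, since $Q$ covers every edge of $H$ and hence every edge of $H-M$.

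For $n\le 2$ the graph is $K_1$ or $K_2$, and both are bipartite. For $n=3$ a connected graph is $P_3$ (bipartite) or $K_3$. For $K_3$ on $\{a,b,c\}$, take $M=\{ab\}$ and $Q=\{c\}$. The two edges left in $H-M$ are $ac$ and $bc$, and both contain $c$, so $|Q|=1=|M|$.

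For $n=4$ I will use a structural dichotomy. A connected graph on four vertices either is the star $K_{1,3}$ or contains a Hamiltonian path. To see this, take a spanning tree. If the tree is the path $P_4$ we are done. Otherwise it is a star with centre $x$ and leaves $a,b,c$. If $H$ is not the star itself, some leaf edge, say $ab$, is present, and then $c\,x\,a\,b$ is a Hamiltonian path. The star $K_{1,3}$ is bipartite, so it is covered by the remark above.

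In the remaining case $H$ has a perfect matching $M=\{ab,cd\}$ coming from a Hamiltonian path. Every edge of $H-M$ joins $\{a,b\}$ to $\{c,d\}$, because the only edges inside $\{a,b\}$ or inside $\{c,d\}$ are the two matching edges. Hence $Q=\{a,b\}$ is a vertex cover of $H-M$ with $|Q|=2=|M|$.

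The only step needing any care is the dichotomy for $n=4$ (star versus Hamiltonian path). The spanning-tree argument above settles it in a few lines, so I expect no real obstacle.
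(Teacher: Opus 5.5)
Your proof is correct and complete. The K\H{o}nig remark disposes of $K_1$, $K_2$, $P_3$ and $K_{1,3}$, and the witness $(\{ab\},\{c\})$ handles $K_3$. Your spanning-tree argument correctly shows that every other connected four-vertex graph has a perfect matching $M=\{ab,cd\}$. Then $Q=\{a,b\}$ covers $H-M$, because the only edges inside $\{a,b\}$ and inside $\{c,d\}$ are the matching edges.

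There is no in-paper proof to compare against. The paper quotes this statement from Puleo and uses it only as a black box (in Lemma~\ref{lem:robust}), so your case analysis is a self-contained substitute for that citation.

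Two optional remarks. First, your $K_3$ case and your perfect-matching case use the same trick. If an edge $xy$ lies in a matching $M$ with $|M|\ge|V(H)|-2$, then $Q=V(H)\setminus\{x,y\}$ covers $H-M$, since the only pair it misses is $xy\in M$. So the star is the only graph where you actually invoke K\H{o}nig, and even there $M=\{xa\}$, $Q=\{x\}$ works by hand. Second, witnesses for different components can be combined by taking unions of the matchings and of the covers. Hence your argument shows that every graph on at most four vertices, connected or not, lies in $\WKE$; connectivity only shortens your case list.
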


\begin{definition}[{\cite[Definition 2.3]{Puleo2015}}]\label{def:robust}
A graph $G$ is \emph{robust} if for every $v\in V(G)$, every component of
$G[\N(v)]$ has order at least $5$.
\end{definition}

\section{Reduction to the \texorpdfstring{$7$}{7}-regular case}\label{sec:regular}

The next lemma identifies the equality case in Puleo's strict hypothesis.
Within maximum degree seven, a connected graph either falls under
Theorem~\ref{thm:puleo} outright, or is $7$-regular and has \emph{all} of its
proper subgraphs falling under it.

\begin{lemma}\label{lem:madhinge}
Let $K$ be a connected graph with $\Delta(K)\le 7$.
\begin{enumerate}
  \item[(a)] If $K$ is not $7$-regular, then $\mad(K)<7$.
  \item[(b)] If $K$ is $7$-regular, then every proper subgraph $J\subsetneq K$
    satisfies $\mad(J)<7$.
\end{enumerate}
\end{lemma}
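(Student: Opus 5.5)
Both parts reduce to one observation. For any nonempty subgraph $H$ of a graph with maximum degree at most $7$, the degree sum gives $2|E(H)|=\sum_{x\in V(H)}\dg_H(x)\le 7|V(H)|$. Equality holds exactly when every vertex of $H$ has degree $7$ in $H$. Since the maximum defining $\mad$ is over finitely many subgraphs, it is attained, so $\mad(\cdot)<7$ is equivalent to: no nonempty subgraph is $7$-regular. The plan is to show that, in each part, no nonempty $7$-regular subgraph $H$ can occur.

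The key step is the following. Suppose $H\subseteq K$ is nonempty and every vertex of $H$ has $H$-degree $7$. Take $x\in V(H)$. Because $\dg_K(x)\le 7=\dg_H(x)$, every $K$-edge at $x$ lies in $E(H)$, and so every $K$-neighbour of $x$ lies in $V(H)$. Hence $V(H)$ is closed under taking $K$-neighbours. Since $K$ is connected, $V(H)=V(K)$. Every vertex then has $\dg_K(x)=\dg_H(x)=7$, so all edges of $K$ are in $H$ and $H=K$.

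For (a), such an $H$ forces $K$ to be $7$-regular, contradicting the hypothesis. So no subgraph attains average degree $7$, and $\mad(K)<7$.

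For (b), let $J\subsetneq K$ and let $H\subseteq J$ be nonempty with average degree $7$. The argument above gives $H=K$, which is impossible because $H\subseteq J\neq K$. Hence $\mad(J)<7$. If $J$ has no vertices, the convention $\mad(\emptyset)=0$ covers it.

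The only point needing care is that $\mad$ is a maximum over a finite set, so the strict inequality survives. Everything else is routine.
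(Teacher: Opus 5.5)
Your proposal is correct and follows essentially the same argument as the paper. A nonempty subgraph of average degree at least $7$ must have every vertex of degree $7$; this makes its vertex set closed under $K$-adjacency, so connectivity forces it to equal $K$, and parts (a) and (b) both follow at once.
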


\begin{proof}
Both parts follow from one computation. Suppose $L$ is a nonempty subgraph of
$K$ with $2|E(L)|/|V(L)|\ge 7$, i.e.\ with average degree at least $7$. Every
vertex of $L$ has degree at most $\Delta(K)\le 7$ in $L$, so the average being
at least $7$ forces every vertex of $L$ to have degree exactly $7$ in $L$.
Fix $x\in V(L)$. Then $x$ has $7$ neighbours inside $L$; since $x$ also has at
most $7$ neighbours in $K$, all neighbours of $x$ in $K$ lie in $V(L)$ and all
edges of $K$ at $x$ lie in $E(L)$. As $x$ was arbitrary, $V(L)$ is closed
under $K$-adjacency and $E(L)$ contains every $K$-edge incident with $V(L)$.
Since $K$ is connected and $V(L)\neq\emptyset$, we get $V(L)=V(K)$ and
$E(L)=E(K)$, that is, $L=K$.

For (a): if $K$ is not $7$-regular then $L=K$ is impossible, so no such $L$
exists and $\mad(K)<7$. For (b): if $J\subsetneq K$ is proper and
$\mad(J)\ge 7$, take such an $L\subseteq J$; then $L=K$, so $J\supseteq K$,
contradicting properness. (If $E(K)=\emptyset$ the statement is vacuous, since
$\mad$ of an edgeless graph is $0$.)
\end{proof}

\begin{proposition}\label{prop:AtoAprime}
Theorem~\ref{thm:regular} implies Theorem~\ref{thm:main}.
\end{proposition}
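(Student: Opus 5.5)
The plan is to reduce to connected components and then split each component according to Lemma~\ref{lem:madhinge}. Let $G$ be any graph with $\Delta(G)\le 7$, and let $G_1,\dots,G_m$ be its connected components. Every triangle lies inside a single component, so triangle packings and triangle transversals decompose componentwise. Hence
\[
  \nu(G)=\sum_{i=1}^m\nu(G_i)\qquad\text{and}\qquad \tau(G)=\sum_{i=1}^m\tau(G_i).
\]
It therefore suffices to prove $\tau(G_i)\le 2\nu(G_i)$ for each $i$ and sum. For $\nu$, a union of maximum packings of the components is a packing, and a packing of $G$ restricts to packings of the components. For $\tau$, a union of transversals is a transversal, and a transversal of $G$ restricts to a transversal of each $G_i$ because every triangle of $G_i$ is a triangle of $G$.

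Now fix a component $K=G_i$. It is connected and satisfies $\Delta(K)\le 7$, so there are two cases.

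\begin{itemize}
\item If $K$ is not $7$-regular, Lemma~\ref{lem:madhinge}(a) gives $\mad(K)<7$, and Theorem~\ref{thm:puleo} yields $\tau(K)\le 2\nu(K)$. This case also covers edgeless and single-vertex components, since there $\mad(K)=0$.
\item If $K$ is $7$-regular, Theorem~\ref{thm:regular}, which we are assuming, gives $\tau(K)\le 2\nu(K)$ directly.
\end{itemize}

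Summing over components completes the argument. There is no real obstacle. The only point needing care is the additivity of $\tau$ and $\nu$ over components, and Lemma~\ref{lem:madhinge} has already absorbed the substantive step, namely that a connected graph of maximum degree at most seven that is not $7$-regular has $\mad<7$. Part (b) of that lemma is not needed here.
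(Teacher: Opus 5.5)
Your proof is correct and follows essentially the same route as the paper. Both use additivity of $\nu$ and $\tau$ over components, then apply Lemma~\ref{lem:madhinge}(a) with Theorem~\ref{thm:puleo} to connected components that are not $7$-regular, and Theorem~\ref{thm:regular} to the $7$-regular ones.
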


\begin{proof}
Both $\nu$ and $\tau$ are additive over connected components: every triangle
lies in a single component, the edge sets of distinct components are disjoint,
and a union of packings (respectively transversals) over the components is a
packing (respectively transversal) of the whole graph, optimal if each part
is. So it suffices to prove $\tau(K)\le 2\nu(K)$ for every connected $K$ with
$\Delta(K)\le 7$.

If $K$ is $7$-regular, this is Theorem~\ref{thm:regular}. If not, then
$\mad(K)<7$ by Lemma~\ref{lem:madhinge}(a), and Theorem~\ref{thm:puleo}
applies.
\end{proof}

\section{The structure of a minimal \texorpdfstring{$7$}{7}-regular counterexample}
\label{sec:minimal}

Throughout this section and Sections~\ref{sec:local}--\ref{sec:codeg4} we fix
the following.

\begin{quote}
\textbf{Standing hypothesis.} $G$ is a connected $7$-regular graph with
$\tau(G)>2\nu(G)$.
\end{quote}

Such a $G$ exists if Theorem~\ref{thm:regular} fails: a counterexample has a
component that is a counterexample, by additivity, and every component of a
$7$-regular graph is $7$-regular and connected. Note $|V(G)|\ge 8$ and
$|V(G)|$ is even.

\begin{lemma}\label{lem:proper}
Every proper subgraph of $G$ satisfies Tuza's conjecture.
\end{lemma}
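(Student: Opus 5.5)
The plan is to combine Lemma~\ref{lem:madhinge}(b) with Theorem~\ref{thm:puleo}. Let $J$ be a proper subgraph of $G$. Under the standing hypothesis, $G$ is connected, $7$-regular, and has $\Delta(G)=7$, so Lemma~\ref{lem:madhinge}(b) applies with $K=G$ and gives $\mad(J)<7$. Theorem~\ref{thm:puleo} then yields $\tau(J)\le 2\nu(J)$, which is exactly the claim.

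Two small points need checking. First, the notion of subgraph must match the one used in Lemma~\ref{lem:madhinge}(b). Our Notation section calls $J$ proper when $(V(J),E(J))\ne(V(G),E(G))$, and that is precisely the hypothesis $J\subsetneq K$ in the lemma. This covers both cases: edge deletions that keep every vertex, such as $G-X$, and vertex deletions, such as $(G-X)-V_0$. These are the two kinds of subgraph that Lemma~\ref{lem:puleo22} later feeds into the recursion.

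Second, the degenerate cases should be addressed. If $J$ has no vertices, then $\nu=\tau=0$ and there is nothing to prove. The convention $\mad(\emptyset)=0$ also covers this case formally. If $J$ is edgeless, then $\mad(J)=0<7$ and the bound is trivial as well.

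There is no real obstacle here: all of the work is already contained in the equality analysis of Lemma~\ref{lem:madhinge}. The lemma's role is to record the consequence used later. Whenever $V_0\subseteq V(G)$ is reducible, the graph $(G-X)-V_0$ is a proper subgraph, because $V_0\ne\emptyset$. It therefore satisfies Tuza's conjecture, and Lemma~\ref{lem:puleo22} forces $\tau(G)\le2\nu(G)$, contradicting the standing hypothesis. Hence $G$ has no reducible vertex set. The same argument applies to reducible edge sets via Lemma~\ref{lem:puleo43}, since $E_0\neq\emptyset$.
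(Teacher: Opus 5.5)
Your proposal is correct and matches the paper's proof: the paper also derives this immediately from Lemma~\ref{lem:madhinge}(b) applied with $K=G$, followed by Theorem~\ref{thm:puleo}. Your remarks on the meaning of ``proper'' and on the degenerate cases are accurate. The paper leaves these points implicit.
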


\begin{proof}
Immediate from Lemma~\ref{lem:madhinge}(b) and Theorem~\ref{thm:puleo}.
\end{proof}

\begin{lemma}\label{lem:norreducible}
$G$ has no reducible vertex set and no reducible edge set.
\end{lemma}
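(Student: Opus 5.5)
We argue by contradiction, using Lemma~\ref{lem:puleo22} and Lemma~\ref{lem:puleo43} together with Lemma~\ref{lem:proper}.

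\emph{Vertex sets.} Suppose $V_0\subseteq V(G)$ is reducible using $S$ and $X$, and set $G'=(G-X)-V_0$. Since $V_0$ is nonempty by Definition~\ref{def:reducible}, deleting it removes at least one vertex. Hence $V(G')\subsetneq V(G)$, and $G'$ is a proper subgraph of $G$. This holds even when $V_0=V(G)$: then $G'$ is the empty graph, which has $\mad=0$ by convention and trivially satisfies $\tau=\nu=0$. Lemma~\ref{lem:proper} gives $\tau(G')\le 2\nu(G')$, and Lemma~\ref{lem:puleo22} then gives $\tau(G)\le2\nu(G)$. This contradicts the standing hypothesis.

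\emph{Edge sets.} Suppose $E_0\subseteq E(G)$ is reducible using $S$ and $X$, and set $G'=(G-X)-E_0$. Here $E_0$ is nonempty by Definition~\ref{def:redgeset}, so $E(G')\subsetneq E(G)$ while $V(G')=V(G)$. Thus $G'$ is again proper in the sense fixed in the Notation. Lemma~\ref{lem:proper} applies to $G'$, and Lemma~\ref{lem:puleo43} lifts the bound to $G$, giving the same contradiction.

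The only step needing care is the claim that $G'$ is genuinely proper. That is exactly why both definitions require the reducible set to be nonempty, and why the empty graph is covered by the convention $\mad(\emptyset)=0$. Everything else is a direct application of the cited black-box lemmas. I do not expect a real obstacle.
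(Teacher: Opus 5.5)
Your proof is correct and follows the paper's argument essentially verbatim: nonemptiness of $V_0$ (respectively $E_0$) makes $G'$ a proper subgraph, Lemma~\ref{lem:proper} gives $\tau(G')\le 2\nu(G')$, and Lemma~\ref{lem:puleo22} (respectively Lemma~\ref{lem:puleo43}) lifts this to $G$, contradicting the standing hypothesis. Your separate remark on the case $V_0=V(G)$ is harmless; the paper does not single it out, since the empty graph trivially has $\tau=\nu=0$.
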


\begin{proof}
Suppose $V_0\subseteq V(G)$ is nonempty and reducible using $S$ and $X$, and
put $G'=(G-X)-V_0$. Since $V_0\ne\emptyset$, $V(G')\subsetneq V(G)$, so $G'$
is a proper subgraph of $G$ and $\tau(G')\le 2\nu(G')$ by
Lemma~\ref{lem:proper}. Lemma~\ref{lem:puleo22} then gives
$\tau(G)\le 2\nu(G)$, contradicting the standing hypothesis.

Likewise, if $E_0\subseteq E(G)$ is nonempty and reducible using $S$ and $X$,
then $G'=(G-X)-E_0$ has $E(G')\subsetneq E(G)$, so $G'$ is proper,
$\tau(G')\le2\nu(G')$, and Lemma~\ref{lem:puleo43} gives the same
contradiction.
\end{proof}

\begin{lemma}\label{lem:robust}
$G$ is robust.
\end{lemma}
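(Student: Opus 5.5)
We argue by contradiction from Lemma~\ref{lem:norreducible}. Suppose $G$ is not robust. Then there are a vertex $v$ and a component $G_0$ of the link $G[\N(v)]$ with $|V(G_0)|\le 4$. The plan is to show $G_0\in\WKE$. Lemma~\ref{lem:puleo44}, first claim, then produces a reducible edge set in $G$, contradicting Lemma~\ref{lem:norreducible}.

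The key step is membership of $G_0$ in $\WKE$. A component is connected by definition, so $G_0$ is a connected graph on at most four vertices, and Corollary~\ref{cor:puleo49} applies directly to give $G_0\in\WKE$. This covers every case, including the degenerate one where $G_0$ is a single isolated vertex $w$ of the link. That edgeless graph lies in $\WKE$, witnessed by $M=\emptyset$ and $Q=\emptyset$, as noted after Definition~\ref{def:wke}.

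We should also check that Lemma~\ref{lem:puleo44} yields a \emph{nonempty} reducible edge set, since Lemma~\ref{lem:norreducible} concerns nonempty sets only. Puleo's set is $E_0=\{vw:w\in V(G_0)\}$, and $G_0$ has at least one vertex, so $E_0\neq\emptyset$. The link is nonempty because $G$ is $7$-regular, so $\N(v)$ has seven vertices and every component is available.

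There is no real obstacle here; the argument is a direct combination of earlier results. The one point needing care is matching hypotheses: Lemma~\ref{lem:puleo44} requires $G_0$ to be a union of whole components of $G[\N(v)]$, which holds because we take a single component. Applying Lemma~\ref{lem:norreducible} to the explicit nonempty $E_0$ then gives the contradiction, so every component of every link has order at least $5$, and $G$ is robust.
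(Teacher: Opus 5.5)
Your proof is correct and is essentially the paper's argument: a component $G_0$ of order at most four lies in $\WKE$ by Corollary~\ref{cor:puleo49}, so Lemma~\ref{lem:puleo44} makes the nonempty edge set $E_0=\{vw : w\in V(G_0)\}$ reducible, which contradicts Lemma~\ref{lem:norreducible}. Your separate treatment of the isolated-vertex case is redundant, since Corollary~\ref{cor:puleo49} already covers it. Your nonemptiness check is likewise automatic, because Definition~\ref{def:redgeset} requires $E_0\neq\emptyset$.
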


\begin{proof}
Let $v\in V(G)$ and let $G_0$ be a component of $G[\N(v)]$ with
$|V(G_0)|\le 4$. Then $G_0$ is connected on at most four vertices, so
$G_0\in\WKE$ by Corollary~\ref{cor:puleo49}. By Lemma~\ref{lem:puleo44} the
edge set $E_0=\{vw : w\in V(G_0)\}$ is reducible; it is nonempty because
$G_0$ is a component of the nonempty graph $G[\N(v)]$ (recall $\dg(v)=7$).
This contradicts Lemma~\ref{lem:norreducible}.
\end{proof}

Lemma~\ref{lem:robust} is \cite[Lemma 2.6]{Puleo2015}; we have repeated its
proof to make explicit that the form of minimality it needs is the one
supplied by Lemma~\ref{lem:proper}.

\begin{lemma}\label{lem:linkstructure}
For every $v\in V(G)$, the link $H_v=G[\N(v)]$ is a connected graph on seven
vertices with $H_v\notin\WKE$.
\end{lemma}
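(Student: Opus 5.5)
The lemma has three parts: the link $H_v$ has seven vertices, it is connected, and it is not in $\WKE$. I would derive each from the minimality already established, namely Lemma~\ref{lem:norreducible}, Lemma~\ref{lem:robust} and Lemma~\ref{lem:puleo44}. Each part is a short deduction; no new construction is needed.

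First, $|V(H_v)|=|\N(v)|=\dg(v)=7$ because $G$ is $7$-regular.

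Second, for connectivity I would use robustness. By Lemma~\ref{lem:robust}, every component of $H_v$ has order at least $5$. Two distinct components would need at least $10>7$ vertices, so $H_v$ has exactly one component and is connected. Alternatively, one could take any component $G_0$ and argue as in Lemma~\ref{lem:robust}, but the counting argument is enough.

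Third, to show $H_v\notin\WKE$, suppose instead that $H_v\in\WKE$. The second assertion of Lemma~\ref{lem:puleo44} then makes $\{v\}$ a reducible vertex set. This contradicts Lemma~\ref{lem:norreducible}. One could also use the first assertion with $G_0=H_v$, a nonempty union of components, to obtain a reducible edge set; either route gives the contradiction.

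I expect no real obstacle. The only point to check is that the hypotheses of the cited lemmas hold as stated: $H_v$ is nonempty because $\dg(v)=7$, and the minimality form required by Lemma~\ref{lem:norreducible} is exactly what Lemma~\ref{lem:proper} supplies. The substantive work of the paper comes later, in using this link structure.
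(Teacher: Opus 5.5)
Your proposal is correct and follows the paper's proof essentially line for line. The link has seven vertices by $7$-regularity, it is connected by robustness (Lemma~\ref{lem:robust}) and the $10>7$ count, and $H_v\notin\WKE$ because otherwise the singleton part of Lemma~\ref{lem:puleo44} would contradict Lemma~\ref{lem:norreducible}; your alternative through the edge-set assertion with $G_0=H_v$ also works, since Lemma~\ref{lem:norreducible} rules out reducible edge sets too.
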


\begin{proof}
$|V(H_v)|=\dg(v)=7$. By Lemma~\ref{lem:robust} every component of $H_v$ has at
least $5$ vertices; two components would need at least $10>7$ vertices, so
$H_v$ is connected. If $H_v\in\WKE$ then $\{v\}$ is reducible by
Lemma~\ref{lem:puleo44}, contradicting Lemma~\ref{lem:norreducible}.
\end{proof}

Lemma~\ref{lem:linkstructure} extracts what the singleton reduction can give:
the negative information $H_v\notin\WKE$. The following finite fact turns that
information into positive local structure. A vertex of degree $k$ in the link
$H_v$ is an edge of triangle codegree $k$ at $v$.

\begin{lemma}[verified exhaustively]\label{lem:B1}
Every connected graph $H$ on seven vertices with $H\notin\WKE$ has at least
three vertices of degree at least four.
\end{lemma}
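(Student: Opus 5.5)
We prove the contrapositive: every connected graph $H$ on seven vertices with at most two vertices of degree at least four lies in $\WKE$. The statement is labelled as verified exhaustively, so the plan has two layers. The primary proof is a machine census over the $853$ connected graphs on seven vertices (up to isomorphism), together with a structural argument that explains why the census comes out this way and shrinks what must be checked.

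For the census, we enumerate all connected seven-vertex graphs. For each $H$ we first decide whether at least three vertices have degree at least four; if so, $H$ imposes no condition. Otherwise we search for a $\WKE$ witness $(M,Q)$ as in Definition~\ref{def:wke}. A matching in a seven-vertex graph has at most three edges, so for each matching $M$ we need only test whether $\vc(H-M)\le|M|$. We may restrict to maximal matchings $M$ satisfying $|M|=3$ or $|M|=\nu'(H)$, since enlarging $M$ only helps: it increases $|M|$ and shrinks $H-M$. Computing $\vc$ on seven vertices is trivial by brute force over vertex subsets. The checker records the witness $(M,Q)$ for every such $H$. A witness is independently verifiable, because one only needs to check that $M$ is a matching, that $|Q|\le|M|$, and that every edge outside $M$ meets $Q$.

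As a sanity argument, and to reduce the search, we note why the low-degree condition should force membership in $\WKE$. If at most two vertices $x,y$ have degree at least four, then $H$ has at most $\deg x+\deg y+\tfrac12\cdot 5\cdot 3$ edges, which is roughly $|E(H)|\le 19$ at the very most, and usually far fewer. Take $Q\subseteq\{x,y\}$ to cover every edge at $x$ and $y$. The remaining graph $H'=H-x-y$ has five vertices and maximum degree at most $3$. We then need a matching $M$ in $H$, disjoint from the edges at $Q$ where necessary, such that $H'-M$ is covered by the spare budget $|M|-|Q|$. The typical case takes $|M|=3$ and $|Q|=2$, which leaves one extra cover vertex. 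So $H'$ minus a matching must have a vertex cover of size one, that is, it must be a star. This fails for dense subcubic five-vertex graphs, so a case analysis on $H'$ (for instance $K_4$ minus an edge plus a pendant vertex, $C_5$, and similar graphs) is needed. In those cases one switches to a $Q$ that does not contain both high-degree vertices. This case analysis is the main obstacle, and it is why we rely on the computer census rather than a clean human proof.

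For the final write-up, we state the enumeration source (nauty \texttt{geng -c 7}) and the witness-checking verifier. We note that the hypothesis of connectivity is used only to restrict the enumeration. We also remark that the bound is sharp in the sense that the census exhibits non-$\WKE$ graphs with exactly three vertices of degree at least four, $K_4$-containing examples being the natural candidates.
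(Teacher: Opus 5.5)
Your proposal is correct and follows the paper's route: the lemma is a finite census, which the paper carries out by brute force over all $2^{21}$ labelled graphs on seven vertices (deciding $\WKE$ directly from the definition), whereas you run over the $853$ connected isomorphism classes and record explicit $\WKE$ witnesses for the contrapositive; this is equally valid, since connectivity, the degree profile and $\WKE$ membership are isomorphism-invariant. One small caveat: ``enlarging $M$ only helps'' justifies restricting to maximal matchings but not to maximum ones, though this is harmless because an over-restricted search can only produce a false alarm, never a false confirmation.
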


\emph{Verification.} There are $2^{21}=2{,}097{,}152$ labelled graphs on a
fixed set of seven vertices. The program \texttt{tests/check\_codegree4.py}
enumerates all of them, decides membership in $\WKE$ directly from
Definition~\ref{def:wke} by enumerating all matching/cover witness pairs,
decides connectivity directly, and reports the degree profile. It finds
$167{,}871$ non-$\WKE$ graphs, of which $166{,}793$ are
connected, and confirms that every one of the latter has at least three
vertices of degree at least four. The bound is attained: exactly $4{,}620$ of
them have exactly three such vertices. \hfill$\square$

\begin{remark}[connectivity is essential]\label{rem:connectivity}
Lemma~\ref{lem:B1} is false without the connectivity hypothesis: the same
enumeration finds $315$ labelled non-$\WKE$ graphs on seven vertices with
fewer than three vertices of degree at least four, and every one of them is
disconnected. This is why Lemma~\ref{lem:linkstructure} must establish
connectivity of the link, and hence why robustness (Lemma~\ref{lem:robust})
is needed before Lemma~\ref{lem:B1} can be applied.
\end{remark}

\begin{corollary}\label{cor:trichotomy}
$G$ has an edge $uv$ with $\cod(uv)\in\{4,5,6\}$. In fact every vertex of $G$
is incident with at least three such edges.
\end{corollary}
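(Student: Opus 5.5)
The plan is to combine Lemma~\ref{lem:linkstructure} with Lemma~\ref{lem:B1} vertex by vertex, reading degrees in the link as triangle codegrees. Fix $v\in V(G)$. By Lemma~\ref{lem:linkstructure}, the link $H_v=G[\N(v)]$ is connected, has seven vertices, and lies outside $\WKE$. Lemma~\ref{lem:B1} therefore gives three distinct vertices $w_1,w_2,w_3\in\N(v)$ with $\deg_{H_v}(w_i)\ge 4$.

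Next I translate link degrees into codegrees. For $w\in\N(v)$,
\[
\deg_{H_v}(w)=|\N(w)\cap\N(v)|=\cod(vw).
\]
This holds because the neighbours of $w$ inside $H_v$ are exactly the common neighbours of $v$ and $w$, and $v\notin\N(v)$. Hence the three distinct edges $vw_1,vw_2,vw_3$, all incident with $v$, each satisfy $\cod(vw_i)\ge 4$.

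For the upper bound I use $7$-regularity. Since $w_i\in\N(v)$ and $w_i\notin\N(w_i)$, we have $\N(v)\cap\N(w_i)\subseteq \N(v)\setminus\{w_i\}$, and so
\[
\cod(vw_i)\le 7-1=6.
\]
Equivalently, the maximum degree of a simple graph on seven vertices is six. Thus $\cod(vw_i)\in\{4,5,6\}$ for $i=1,2,3$, which proves the second sentence of the corollary. The first sentence follows because $G$ is nonempty; indeed $|V(G)|\ge 8$ under the standing hypothesis.

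All of the real difficulty sits in the exhaustive verification behind Lemma~\ref{lem:B1}. Given that lemma, the one point that needs care is that the connectivity hypothesis of Lemma~\ref{lem:B1} is supplied by Lemma~\ref{lem:linkstructure}, and hence ultimately by robustness, as Remark~\ref{rem:connectivity} explains.
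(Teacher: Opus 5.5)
Your proposal is correct and follows essentially the same argument as the paper. Like the paper, you fix a vertex, apply Lemma~\ref{lem:linkstructure} and then Lemma~\ref{lem:B1} to its link, identify link degree with triangle codegree, and bound the codegree by $6$ using $7$-regularity.
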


\begin{proof}
Fix $v\in V(G)$ and let $H_v=G[\N(v)]$. For $u\in \N(v)$ the degree of $u$
inside $H_v$ is $|\N(u)\cap \N(v)|=\cod(uv)$. By
Lemma~\ref{lem:linkstructure}, $H_v$ is connected, on seven vertices and not
in $\WKE$, so by Lemma~\ref{lem:B1} at least three of its vertices have degree
at least four; that is, at least three edges $uv$ at $v$ have $\cod(uv)\ge4$.
For the upper bound, if $uv\in E(G)$ then
$\N(u)\cap \N(v)\subseteq \N(u)\setminus\{v\}$, which has $6$ elements, so
$\cod(uv)\le 6$.
\end{proof}

Only the existence of one edge of codegree at least four is used below.

\subsection{Why the link condition does not reduce a singleton}
\label{subsec:k8}

Before turning to the reductions themselves we record the fact, announced in
Section~\ref{subsec:obstruction}, that the conclusion of
Lemma~\ref{lem:linkstructure} cannot be used to reduce a single vertex. The
relevant example is $K_8$. It is $7$-regular, and it satisfies Tuza's
conjecture ($\nu(K_8)=8$, $\tau(K_8)=12$), so it is not a counterexample to
anything; the point is that it has the same connected non-$\WKE$ links and
yet no singleton in it is reducible.

\begin{observation}\label{obs:k7nonwke}
$K_7\notin\WKE$, so every link of $K_8$ is connected and not in $\WKE$, and
$K_8$ is robust.
\end{observation}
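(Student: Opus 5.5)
The plan is to argue directly from Definition~\ref{def:wke}, with no computation. Suppose $(M,Q)$ witnesses $K_7\in\WKE$. Any matching in $K_7$ has $|M|\le\lfloor 7/2\rfloor=3$, so $|Q|\le 3$. I will then show that every vertex cover of $K_7-M$ has at least $5$ vertices, which is the required contradiction.

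To bound the cover, pass to complements: $Q$ is a vertex cover of $K_7-M$ exactly when $V(K_7)\setminus Q$ is independent in $K_7-M$. A set $I$ is independent in $K_7-M$ only if every pair of vertices of $I$ is an edge of $M$. Three vertices of $I$ would give three pairwise intersecting edges of $M$. This is impossible in a matching, so $|I|\le 2$. Hence $|Q|\ge 7-2=5>3\ge|M|$, and $K_7\notin\WKE$.

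For the remaining claims, $K_8$ is $7$-regular and each link $K_8[\N(v)]$ is $K_7$. That link is connected and, by the above, not in $\WKE$. It has a single component of order $7\ge5$, so $K_8$ is robust by Definition~\ref{def:robust}.

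There is no real obstacle here. The only point needing care is the observation that an independent set of $K_7-M$ has all its pairs inside $M$, and so has size at most $2$.
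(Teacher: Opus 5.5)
Your proposal is correct and follows essentially the same route as the paper: bound $|M|\le 3$, show that $K_7-M$ has independence number at most $2$ because three pairwise non-adjacent vertices would need a triangle inside the matching $M$, conclude $\vc(K_7-M)\ge 5>3$, and then read off connectivity and robustness from the fact that each link of $K_8$ is $K_7$.
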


\begin{proof}
A matching $M$ of $K_7$ has $|M|\le3$. The graph $K_7-M$ has independence
number at most $2$: three pairwise non-adjacent vertices of $K_7-M$ would be
pairwise joined by edges of $M$, i.e.\ would form a triangle inside $M$, which
is impossible for a matching. Hence
$\vc(K_7-M)=7-\alpha(K_7-M)\ge 5>3\ge|M|$, so no pair $(M,Q)$ witnesses
$K_7\in\WKE$. Each link of $K_8$ is $K_7$, which is connected on seven
vertices, so every component of every link has order $7\ge5$.
\end{proof}

\begin{proposition}\label{prop:k8}
No singleton $\{v\}\subseteq V(K_8)$ is reducible.
\end{proposition}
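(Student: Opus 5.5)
The plan is to argue directly from Definition~\ref{def:reducible} by counting, using two facts about $K_8$: every vertex other than $v$ lies in $\N(v)$, so every edge not at $v$ is an edge of the link $K_7$; and the triangles of $S$ split into those through $v$ and those avoiding $v$. Suppose $\{v\}$ is reducible using $S$ and $X$. Let $a$ be the number of triangles of $S$ containing $v$ and $b$ the number avoiding $v$, so $|S|=a+b$ and (i) reads $|X|\le 2a+2b$. Since the triangles are edge-disjoint and each one through $v$ uses two of the seven edges at $v$, we have $a\le 3$.

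Next I list the edges that $X$ is forced to contain, in two disjoint families.
\begin{enumerate}
  \item[(A)] Let $T\subseteq \N(v)$ be the set of $w$ with $vw\in X$, and put $t=|T|$. These are the $t$ edges of $X$ at $v$.
  \item[(B)] Condition (iii) forces every $S$-edge with both endpoints outside $\{v\}$ into $X$. A triangle of $S$ through $v$ contributes its one edge opposite $v$; a triangle avoiding $v$ contributes all three edges. By edge-disjointness these are $a+3b$ distinct edges, none incident with $v$.
\end{enumerate}
Condition (ii) also constrains the link edges of $X$. For $w,w'\in \N(v)\setminus T$ the triangle $vww'$ must be destroyed, and neither $vw$ nor $vw'$ is in $X$, so $ww'\in X$. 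Hence $X$ contains all $\binom{7-t}{2}$ edges of the clique on $\N(v)\setminus T$. These edges, like those in (B), are not incident with $v$. The edges of $X$ not at $v$ therefore number at least $\max\bigl\{a+3b,\binom{7-t}{2}\bigr\}$, which gives
\[
  t+\max\Bigl\{a+3b,\tbinom{7-t}{2}\Bigr\}\;\le\;|X|\;\le\;2a+2b .
\]

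The contradiction then comes from combining the two branches of the maximum. The first branch gives $a+3b\le 2a+2b-t$, that is, $b\le a-t\le 3-t$. In particular $t\le 3$, since $b\ge 0$ and $a\le 3$. Substituting $a\le 3$ and $b\le 3-t$ into the second branch gives $\binom{7-t}{2}\le 2a+2b-t\le 12-3t$. This fails for each remaining case: at $t=0,1,2,3$ it reads $21\le 12$, $15\le 9$, $10\le 6$ and $6\le 3$. So no such $S,X$ exist.

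The main point to get right is the bookkeeping of overlaps. The lower bound must only take a maximum, not a sum, over the families of edges not at $v$, because the clique edges on $\N(v)\setminus T$ may coincide with $S$-edges. It must also correctly use that the $S$-edges at $v$ are not forced into $X$. Once the inequality above is set up honestly, the case check over $t$ is routine.
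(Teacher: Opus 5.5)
Your proof is correct and follows the paper's argument essentially step for step. You use the same split of $S$ into $a\le 3$ triangles through $v$ and $b$ avoiding it, the same forced clique on $\N(v)\setminus T$, and the same two lower bounds on the edges of $X$ not at $v$ (the $a+3b$ forced $S$-edges and the $\binom{7-t}{2}$ clique edges), taken as a maximum rather than a sum. The only cosmetic difference is that you substitute $b\le 3-t$ directly to get $\binom{7-t}{2}\le 12-3t$, whereas the paper tabulates, for each $t\le 3$, the required and allowed values of $b$.
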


\begin{proof}
Write $G=K_8$, fix $v$, and let $U=V(G)\setminus\{v\}$, so $|U|=7$ and
$G[U]=K_7$. Suppose $\{v\}$ is reducible using $S$ and $X$.

Let $D=\{x\in U : vx\in X\}$ and $d=|D|$. Since $G-X$ has no triangle
containing $v$, for all distinct $x,y\in U\setminus D$ the triangle $vxy$
must meet $X$, and since $vx,vy\notin X$ this forces $xy\in X$. Hence
\[
  F:=\binom{U\setminus D}{2}\subseteq X,\qquad |F|=\tbinom{7-d}{2},
\]
where $\binom{Z}{2}$ denotes the set of all unordered pairs from $Z$.

Split $S$ into the $k$ triangles containing $v$ and the $m$ triangles avoiding
$v$, so $|S|=k+m$. A triangle of $S$ through $v$ is $vxy$ with $x,y\in U$; two
such triangles are edge-disjoint only if their vertex pairs are disjoint, so
those pairs form a matching in $U$ and $k\le3$. Let $P\subseteq\binom{U}{2}$
be the set of $S$-edges avoiding $v$: it consists of the $k$ ``rim'' edges
$xy$ of the triangles $vxy\in S$, together with the $3m$ edges of the $m$
triangles of $S$ inside $U$. All of these are distinct, because $S$ is
edge-disjoint, so $|P|=3m+k$; and by condition (iii) of
Definition~\ref{def:reducible}, $P\subseteq X$.

Since $X$ contains the $d$ edges $\{vx : x\in D\}$ and the set $F\cup P$ of
edges inside $U$, and these are disjoint families,
\begin{equation}\label{eq:k8}
  d+|F\cup P|\;\le\;|X|\;\le\;2|S|\;=\;2(k+m).
\end{equation}
Using $|F\cup P|\ge|P|=3m+k$ in \eqref{eq:k8} gives $d+3m+k\le 2k+2m$, i.e.
\begin{equation}\label{eq:k8a}
  d+m\;\le\;k\;\le\;3 .
\end{equation}
Using $|F\cup P|\ge|F|$ in \eqref{eq:k8} and $k\le3$ gives
\begin{equation}\label{eq:k8b}
  d+\tbinom{7-d}{2}\;\le\;2(k+m)\;\le\;6+2m .
\end{equation}
By \eqref{eq:k8a}, $d\le3$ and $m\le3-d$. We check the four cases:
\[
\begin{array}{c|c|c|c}
 d & d+\binom{7-d}{2} & \text{\eqref{eq:k8b} requires} & \text{\eqref{eq:k8a} allows}\\
\hline
 0 & 21 & m\ge 8 & m\le 3\\
 1 & 16 & m\ge 5 & m\le 2\\
 2 & 12 & m\ge 3 & m\le 1\\
 3 &  9 & m\ge 2 & m\le 0\\
\end{array}
\]
Every case is contradictory, so $\{v\}$ is not reducible.
\end{proof}

Thus the connected non-$\WKE$ link condition supplied by
Lemma~\ref{lem:linkstructure} does not imply singleton reducibility. The
reductions in Sections~\ref{sec:codeg6}--\ref{sec:codeg4} use the additional
high-codegree edge structure and act on \emph{pairs}.

\section{The local graph at an edge, and ambient safety}\label{sec:local}

The three reductions in Sections~\ref{sec:codeg6}--\ref{sec:codeg4} concern a
bounded neighbourhood of an edge, but they must hold inside an unknown ambient
graph $G$. This section proves that such local reasoning is legitimate.

Throughout, $G$ is $7$-regular (the standing hypothesis is not needed for this
section) and $uv\in E(G)$ with $c=\cod(uv)$. Put
\[
  C=\N(u)\cap \N(v),\qquad
  A=\N(u)\setminus(\{v\}\cup C),\qquad
  B=\N(v)\setminus(\{u\}\cup C).
\]

\begin{lemma}\label{lem:disjoint}
$\{u,v\}$, $A$, $B$ and $C$ are pairwise disjoint, $|C|=c$ and
$|A|=|B|=6-c$. In particular $W:=\{u,v\}\cup A\cup B\cup C$ satisfies
$|W|=2+c+2(6-c)=14-c$.
\end{lemma}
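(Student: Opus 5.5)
The plan is to prove this by direct bookkeeping from the definitions of $A$, $B$, $C$ and the $7$-regularity of $G$.

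\emph{Disjointness.} Since $G$ is simple, $u\notin \N(u)$ and $v\notin\N(v)$. Hence $u,v\notin C$. By definition $A\subseteq \N(u)$ excludes $v$, and $u\notin\N(u)$, so $A\cap\{u,v\}=\emptyset$; symmetrically $B\cap\{u,v\}=\emptyset$. Also $A\cap C=\emptyset$ and $B\cap C=\emptyset$ by construction. For $A\cap B$: a vertex in both would lie in $\N(u)\cap\N(v)=C$, but $A$ avoids $C$, so $A\cap B=\emptyset$.

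\emph{Cardinalities.} $|C|=\cod(uv)=c$ by the definition of triangle codegree. Since $uv\in E(G)$, we have $v\in\N(u)$, and $C\subseteq \N(u)\setminus\{v\}$ because $v\notin C$. Therefore $\N(u)$ is the disjoint union $\{v\}\cup C\cup A$, giving $7=1+c+|A|$, so $|A|=6-c$. The same argument with $u$ and $v$ exchanged gives $|B|=6-c$.

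\emph{Size of $W$.} By pairwise disjointness,
\[
  |W| = 2+c+2(6-c) = 14-c .
\]
There is no real obstacle here. The only point needing care is that $v\in\N(u)$ and $u\in\N(v)$, which is exactly where the hypothesis $uv\in E(G)$ is used.
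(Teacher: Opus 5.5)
Your proposal is correct and follows essentially the same route as the paper: disjointness is read off from the definitions, with $A\cap B=\emptyset$ because a common vertex would lie in $\N(u)\cap\N(v)=C$. The sizes then come from the partition $\N(u)=\{v\}\sqcup C\sqcup A$ together with $\dg(u)=7$, and symmetrically for $v$.
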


\begin{proof}
$A\cap C=B\cap C=\emptyset$ and $A\cap\{v\}=B\cap\{u\}=\emptyset$ by
definition, and $u\notin \N(u)$, $v\notin \N(v)$ since $G$ is simple. If
$x\in A\cap B$ then $x\in \N(u)$ and $x\in \N(v)$, so $x\in C$, a
contradiction; hence $A\cap B=\emptyset$. Also $u\notin B$ and $v\notin A$ by
definition, and $u,v\notin C$. Since $\dg(u)=7$ and
$\N(u)=\{v\}\sqcup C\sqcup A$, we get $|A|=6-c$; symmetrically $|B|=6-c$.
Thus $|W|=2+c+2(6-c)=14-c$.
\end{proof}

\begin{definition}\label{def:localgraph}
The \emph{local graph} $L=L(G,uv)$ is the graph with vertex set $W$ and edge
set consisting of all edges of $G[W]$ except those with one endpoint in $A$
and the other in $B$.
\end{definition}

Explicitly, $E(L)$ consists of $uv$; the edges $ux$ for $x\in C\cup A$; the
edges $vx$ for $x\in C\cup B$; and all $G$-edges inside $C\cup A$, inside
$C\cup B$, and inside $C$. There are no $G$-edges $vy$ with $y\in A$ or $ux$
with $x\in B$, by the definition of $A$ and $B$. See
Figure~\ref{fig:local}.

\begin{figure}[t]
\centering
\begin{tikzpicture}[
  blob/.style={draw,thick,shape=ellipse,align=center,inner sep=3pt,
               minimum width=30mm,minimum height=13mm},
  hub/.style={circle,fill=black,inner sep=1.7pt}]
  \node[blob] (C) at (0,2.5) {$C=\N(u)\cap\N(v)$\\[-3pt]{\footnotesize $|C|=c$}};
  \node[blob,minimum width=20mm] (A) at (-4.0,-2.1)
        {$A$\\[-3pt]{\footnotesize $|A|=6-c$}};
  \node[blob,minimum width=20mm] (B) at (4.0,-2.1)
        {$B$\\[-3pt]{\footnotesize $|B|=6-c$}};
  \node[hub] (u) at (-2.5,0) {};
  \node[hub] (v) at (2.5,0) {};
  \node at (-2.5,0.42) {$u$};
  \node at (2.5,0.42) {$v$};
  \draw[very thick] (u) -- (v);
  \node[font=\footnotesize] at (0,-0.3) {$uv$};
  \draw[very thick] (u) -- (C);
  \draw[very thick] (v) -- (C);
  \draw[very thick] (u) -- (A);
  \draw[very thick] (v) -- (B);
  \draw[dotted,thick] (A) -- (C);
  \draw[dotted,thick] (B) -- (C);
  \draw[dashed] (A) -- (B);
\end{tikzpicture}
\caption{The local graph $L=L(G,uv)$ at an edge of triangle codegree $c$ in a
$7$-regular graph. Thick lines denote complete adjacency: $u$ and $v$ are
adjacent to each other and to every vertex of $C$, $u$ to every vertex of $A$,
and $v$ to every vertex of $B$. Dotted lines denote adjacencies that are
arbitrary; so are the adjacencies inside $C$, inside $A$ and inside $B$. The
dashed $A$--$B$ adjacencies, together with all edges leaving $W$, are discarded
in passing from $G[W]$ to $L$; by Lemma~\ref{lem:localcomplete} they are
invisible to the triangles through $u$ or $v$.}
\label{fig:local}
\end{figure}

\begin{lemma}[completeness of the local model]\label{lem:localcomplete}
\begin{enumerate}
  \item[(a)] Every triangle of $G$ containing $u$ or $v$ has all three
    vertices in $W$ and all three edges in $E(L)$; hence it is a triangle
    of $L$.
  \item[(b)] Conversely, every triangle of $L$ is a triangle of $G$.
\end{enumerate}
\end{lemma}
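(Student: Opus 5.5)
Part (b) is immediate: $L$ is a subgraph of $G$, since its vertex set $W$ lies in $V(G)$ and its edges are edges of $G[W]$. Any three vertices of $L$ that are pairwise adjacent in $L$ are therefore pairwise adjacent in $G$.

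Part (a) is a short case analysis. By the symmetry $u\leftrightarrow v$, $A\leftrightarrow B$ in Definition~\ref{def:localgraph}, it suffices to treat a triangle $T$ of $G$ containing $u$. Write $T=uxy$. Then $x,y\in\N(u)=\{v\}\sqcup C\sqcup A$, so all three vertices of $T$ lie in $W$. The edges $ux$ and $uy$ lie in $E(L)$, because $L$ keeps $uv$ and every edge $uz$ with $z\in C\cup A$. It remains to see that $xy\in E(L)$.

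\begin{itemize}
\item If one of $x,y$ equals $v$, say $x=v$, then $y\in\N(u)\cap\N(v)=C$, and $vy\in E(L)$ because $L$ keeps all edges $vz$ with $z\in C\cup B$.
\item Otherwise $x,y\in C\cup A$, so $xy$ is an edge of $G[W]$ with both endpoints in $C\cup A$. Such an edge is never an $A$--$B$ edge, since $A\cap B=\emptyset$ (Lemma~\ref{lem:disjoint}), so $L$ retains it.
\end{itemize}

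The only edges of $G[W]$ that $L$ discards are the $A$--$B$ edges. A triangle through $u$ cannot use one, because it never touches $B$; symmetrically, a triangle through $v$ never touches $A$. Hence $T$ is a triangle of $L$.

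The main point to check is exactly this: no triangle through $u$ or $v$ can contain an $A$--$B$ edge. Such an edge $ab$, with $a\in A$ and $b\in B$, would require the third vertex to be adjacent to both $a$ and $b$. If that third vertex were $u$, then $b\in\N(u)$, contradicting $b\in B$, since $B$ is disjoint from $\N(u)$. If it were $v$, then $a\in\N(v)$, contradicting $a\in A$. Both cases are excluded by Lemma~\ref{lem:disjoint} and the definitions of $A$ and $B$. The rest is bookkeeping.
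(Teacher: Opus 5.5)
Your proof is correct and follows essentially the same route as the paper. Part (b) comes from $E(L)\subseteq E(G)$. Part (a) places the other two vertices in $\N(u)=\{v\}\cup C\cup A\subseteq W$, notes that neither the spokes nor the edge $xy$ can be $A$--$B$ edges, and uses $y\in\N(u)\cap\N(v)=C$ when $x=v$. One small precision: the disjointness your second bullet needs is $B\cap(C\cup A)=\emptyset$, not only $A\cap B=\emptyset$. Lemma~\ref{lem:disjoint} supplies both, and your closing paragraph just restates that same check.
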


\begin{proof}
(b) is immediate since $E(L)\subseteq E(G)$.

(a) Let $T=uxy$ be a triangle of $G$ through $u$ (the case of $v$ is
symmetric). Then $x,y\in \N(u)=\{v\}\cup C\cup A\subseteq W$, so all vertices
of $T$ lie in $W$. The edges $ux,uy$ have an endpoint $u\notin A\cup B$, so
neither is an $A$--$B$ edge, and both lie in $E(L)$. For the edge $xy$: if
$v\notin\{x,y\}$ then $x,y\in C\cup A$, so $xy$ is not an $A$--$B$ edge and
lies in $E(L)$; if, say, $x=v$, then $y\in \N(u)\cap \N(v)=C$ and $vy\in E(L)$.
\end{proof}

Lemma~\ref{lem:localcomplete}(a) says that $A$--$B$ edges, edges from $W$ to
$V(G)\setminus W$, and edges outside $W$ are all invisible to triangles
through $u$ or $v$. Therefore conditions checked in $L$ against triangles
through the hubs remain valid in the ambient graph.

\begin{corollary}[local reducibility criterion]\label{cor:localcriterion}
Let $S$ be a set of pairwise edge-disjoint triangles of $L$ and let
$X\subseteq E(L)$ satisfy
\begin{enumerate}
  \item[(i)] $|X|\le 2|S|$;
  \item[(ii)] every triangle of $L$ containing $u$ or $v$ contains an edge of
    $X$;
  \item[(iii)] every $S$-edge with both endpoints outside $\{u,v\}$ lies in
    $X$.
\end{enumerate}
Then $\{u,v\}$ is reducible in $G$, using $S$ and $X$.
\end{corollary}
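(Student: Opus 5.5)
The plan is to check the three conditions of Definition~\ref{def:reducible} directly in $G$ for $V_0=\{u,v\}$, using the same $S$ and $X$, and to transfer each from $L$ to $G$ by Lemma~\ref{lem:localcomplete}.

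First, $S$ and $X$ are admissible objects in $G$. By Lemma~\ref{lem:localcomplete}(b), every triangle of $L$ is a triangle of $G$. Pairwise edge-disjointness is a property of edge sets and does not depend on the ambient graph, so $S$ is a set of edge-disjoint triangles of $G$. Also $X\subseteq E(L)\subseteq E(G)$. Condition~(i) of Definition~\ref{def:reducible} is literally hypothesis~(i).

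Condition~(iii) of Definition~\ref{def:reducible} asks that every $S$-edge with both endpoints outside $V_0=\{u,v\}$ lie in $X$. The $S$-edges are determined by $S$ alone, so they are the same whether $S$ is viewed in $L$ or in $G$. This condition is therefore exactly hypothesis~(iii).

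The only substantive point is condition~(ii): $G-X$ has no triangle containing $u$ or $v$. Suppose $T$ is a triangle of $G-X$ containing $u$ or $v$. Then $T$ is a triangle of $G$ through a hub, so by Lemma~\ref{lem:localcomplete}(a) it is a triangle of $L$ through $u$ or $v$. By hypothesis~(ii), $T$ contains an edge of $X$, so $T$ is not a triangle of $G-X$, a contradiction.

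Hence $\{u,v\}$ is reducible in $G$ using $S$ and $X$. There is no real obstacle here. The one thing to be careful about is the direction of the transfer: (ii) needs part~(a) of Lemma~\ref{lem:localcomplete}, since triangles of $G$ through the hubs must be seen in $L$, while admissibility of $S$ needs part~(b).
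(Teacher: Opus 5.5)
Your proposal is correct and matches the paper's proof essentially step for step. Part (b) of Lemma~\ref{lem:localcomplete} makes $S$ and $X$ admissible in $G$, conditions (i) and (iii) carry over verbatim with $V_0=\{u,v\}$, and part (a) settles condition (ii) by the same contradiction argument.
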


\begin{proof}
By Lemma~\ref{lem:localcomplete}(b) the members of $S$ are triangles of $G$,
and they are pairwise edge-disjoint; also $X\subseteq E(L)\subseteq E(G)$.
Condition (i) of Definition~\ref{def:reducible} is (i), and condition (iii)
is (iii), verbatim with $V_0=\{u,v\}$. For condition (ii), suppose $G-X$ has
a triangle $T$ containing $u$ or $v$. By Lemma~\ref{lem:localcomplete}(a),
$T$ is a triangle of $L$ containing $u$ or $v$, so by (ii) it contains an edge
of $X$ --- contradicting $T\subseteq G-X$.
\end{proof}

For codegree four we also use the following arithmetic constraint imposed by
$7$-regularity on the common neighbours.

\begin{lemma}[degree budget]\label{lem:budget}
For every $x\in C$,
\[
  \dg_{G[C]}(x)+|\N(x)\cap A|+|\N(x)\cap B|\;\le\;5 .
\]
\end{lemma}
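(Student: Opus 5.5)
The plan is to count the neighbourhood of $x$ directly; $7$-regularity of $x$ does all the work. Fix $x\in C$. Since $x\in \N(u)\cap \N(v)$, both $u$ and $v$ are neighbours of $x$. These account for two of the seven neighbours of $x$, leaving at most five others.

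Next I would note that the remaining neighbours of $x$ counted on the left-hand side lie in three sets:
\begin{itemize}
  \item $\N(x)\cap C$, whose size is $\dg_{G[C]}(x)$ because $x\notin \N(x)$;
  \item $\N(x)\cap A$;
  \item $\N(x)\cap B$.
\end{itemize}
By Lemma~\ref{lem:disjoint}, the sets $C$, $A$, $B$ and $\{u,v\}$ are pairwise disjoint. So these three intersections are pairwise disjoint, and all of them are disjoint from $\{u,v\}$. Hence
\[
  2+\dg_{G[C]}(x)+|\N(x)\cap A|+|\N(x)\cap B|\;\le\;|\N(x)|\;=\;\dg(x)\;=\;7 ,
\]
which rearranges to the claimed bound of $5$.

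There is no real obstacle here. The only point needing care is that the four pieces are counted without overlap, and disjointness of $\{u,v\}$, $A$, $B$, $C$ settles that. The inequality can be strict, since $x$ may have neighbours outside $W$.
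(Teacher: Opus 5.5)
Your proposal is correct and is essentially the paper's own proof. You use the same count: $x$ has the two neighbours $u,v$ and degree $7$, and the three terms count pairwise disjoint sets of further neighbours by Lemma~\ref{lem:disjoint}.
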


\begin{proof}
$x$ is adjacent to both $u$ and $v$, and $\dg_G(x)=7$, so $x$ has at most $5$
neighbours in $V(G)\setminus\{u,v\}$. The three quantities on the left count
disjoint sets of such neighbours, by Lemma~\ref{lem:disjoint}.
\end{proof}

\section{A packing and cover template}\label{sec:template}

For triangle codegrees five and six we use a uniform recipe for producing
witnesses, driven by a single set $R$ of edges inside $C$: the packing $S$ is
built from triangles supported by $R$, and the transversal $X$ consists of $R$
itself, the spokes at a vertex cover of the rest of $G[C]$, and a few forced
edges at the hubs. Everything then reduces to one inequality. Put $H=G[C]$ and,
for $R\subseteq E(H)$, define the following.

\begin{definition}\label{def:template}
\phantom{.}
\begin{itemize}
  \item $q(R)=\vc(H-R)$, the vertex cover number of the graph on $C$ with edge
    set $E(H)\setminus R$;
  \item $b(R)$ is the largest number of edges of a subgraph $F\subseteq R$
    that is the edge-disjoint union of two matchings --- equivalently
    (see below) whose components are paths and even cycles;
  \item for $x\in C$, $R-x$ denotes the set of edges of $R$ not incident with
    $x$; and
  \item $p(R)=\max\bigl(b(R),\ 1+\max_{x\in C}b(R-x)\bigr)$.
\end{itemize}
\end{definition}

The stated equivalence in the definition of $b$ is standard: a graph is the
union of two matchings if and only if its chromatic index is at most $2$, if
and only if its maximum degree is at most $2$ and it has no odd cycle, if and
only if its components are paths and even cycles.

\begin{lemma}[what $p(R)$ counts]\label{lem:packing}
Let $\mathcal{T}(R)$ be the set of triangles of the forms
\[
  uvx\ (x\in C),\qquad uxy\ (xy\in R),\qquad vxy\ (xy\in R).
\]
Then the maximum size of a pairwise edge-disjoint subfamily of
$\mathcal{T}(R)$ is exactly $p(R)$.
\end{lemma}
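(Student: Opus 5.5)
Two inequalities are needed: every edge-disjoint subfamily of $\mathcal{T}(R)$ has at most $p(R)$ members, and some subfamily has exactly $p(R)$ members. The argument tracks which edges each triangle type consumes. A triangle $uvx$ uses $uv$, $ux$ and $vx$. A triangle $uxy$ with $xy\in R$ uses $ux$, $uy$ and $xy$; symmetrically $vxy$ uses $vx$, $vy$ and $xy$. Since $uv$ lies in every triangle of the first kind, an edge-disjoint family $\mathcal{F}$ contains at most one triangle $uvx_0$. Let $F_u$ and $F_v$ be the sets of rim edges $xy\in R$ of the triangles in $\mathcal{F}$ through $u$ and through $v$, respectively. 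Edge-disjointness at $u$ means the triangles $uxy$ in $\mathcal{F}$ share no spoke $ux$, so $F_u$ is a matching. Likewise $F_v$ is a matching. Since the rim edge $xy$ is shared by $uxy$ and $vxy$, the matchings $F_u$ and $F_v$ are disjoint. Conversely, any pair of disjoint matchings $M_1,M_2\subseteq R$ gives an edge-disjoint family $\{uxy: xy\in M_1\}\cup\{vxy: xy\in M_2\}$, because the spokes at $u$ and at $v$ are distinct and the rim edges are distinct.

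For the upper bound, first suppose $\mathcal{F}$ contains no triangle $uvx$. Then $|\mathcal{F}|=|F_u|+|F_v|$, and $F_u\cup F_v\subseteq R$ is an edge-disjoint union of two matchings, so $|\mathcal{F}|\le b(R)$. Now suppose $\mathcal{F}$ contains $uvx_0$. Then $ux_0$ and $vx_0$ are used, so neither $F_u$ nor $F_v$ covers $x_0$. Hence $F_u\cup F_v\subseteq R-x_0$ and $|\mathcal{F}|\le 1+b(R-x_0)$. In both cases $|\mathcal{F}|\le p(R)$.

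For the lower bound, take a maximum subgraph $F\subseteq R$ that is an edge-disjoint union of two matchings $M_1,M_2$; such a split exists by the chromatic-index equivalence stated after Definition~\ref{def:template}. Assign $M_1$ to $u$ and $M_2$ to $v$ as in the converse above, which gives $b(R)$ disjoint triangles. For the other term, choose $x\in C$ attaining $\max_x b(R-x)$ and do the same inside $R-x$. No rim edge in $R-x$ is incident with $x$, so no spoke $ux$ or $vx$ is used, and $uvx$ can be added; $uv$ is used nowhere else. The maximum of the two constructions is $p(R)$.

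I expect no real obstacle here. The only points needing care are that a rim edge $xy$ cannot serve both $u$ and $v$, which is exactly why $F_u$ and $F_v$ must be disjoint, and that the equivalence "two matchings $\Leftrightarrow$ chromatic index at most $2$" lets any such $F$ be split into $M_1$ and $M_2$.
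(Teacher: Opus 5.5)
Your proposal is correct and follows the paper's proof essentially step for step. Attainment uses the same two constructions: assign the matchings $M_1,M_2$ to $u$ and $v$, and add $uvx$ to a construction inside $R-x$. The upper bound uses the same observation that the rim edges at $u$ and at $v$ form disjoint matchings, which must avoid $x_0$ when $uvx_0$ is used.

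One small remark: the split of $F$ into $M_1,M_2$ is given directly by the definition of $b(R)$, so the appeal to the chromatic-index equivalence is unnecessary, though harmless.
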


\begin{proof}
\emph{$p(R)$ is attained.} First, let $F\subseteq R$ have $|F|=b(R)$ and be
the disjoint union of matchings $M_1,M_2$. Take
$S=\{uxy : xy\in M_1\}\cup\{vxy : xy\in M_2\}$. The $u$-spokes used are
$\{ux,uy : xy\in M_1\}$, pairwise distinct because $M_1$ is a matching;
similarly for the $v$-spokes; and the rim edges used are the edges of
$M_1\sqcup M_2=F$, pairwise distinct. A $u$-spoke is never a $v$-spoke and
never a rim edge. So $S$ is edge-disjoint and $|S|=|M_1|+|M_2|=b(R)$.
Second, fix $x\in C$ and let $F\subseteq R-x$ have $|F|=b(R-x)$ with parts
$M_1,M_2$ as before. Take $S=\{uvx\}\cup\{uyz:yz\in M_1\}\cup\{vyz:yz\in
M_2\}$. The triangle $uvx$ uses $uv,ux,vx$; no other member of $S$ uses $uv$,
and no other member uses $ux$ or $vx$ because no edge of $F$ is incident with
$x$. So $S$ is edge-disjoint of size $1+b(R-x)$.

\emph{$p(R)$ is an upper bound.} Let $S\subseteq\mathcal{T}(R)$ be
edge-disjoint. At most one member of $S$ is of the form $uvx$, since any two
share the edge $uv$. Let $S_u=\{xy\in R : uxy\in S\}$ and
$S_v=\{xy\in R : vxy\in S\}$. If $xy,xz\in S_u$ with $y\ne z$ then $uxy$ and
$uxz$ share $ux$; so $S_u$ is a matching, and likewise $S_v$. If
$xy\in S_u\cap S_v$ then $uxy$ and $vxy$ share the rim edge $xy$; so
$S_u\cap S_v=\emptyset$. Hence $S_u\sqcup S_v$ is a subgraph of $R$ that is
the disjoint union of two matchings, so $|S_u|+|S_v|\le b(R)$.

If no member of $S$ has the form $uvx$, then $|S|=|S_u|+|S_v|\le b(R)$.
Otherwise $uvx\in S$ for exactly one $x$, and no member $uyz$ or $vyz$ of $S$
can have $x\in\{y,z\}$ (it would share $ux$, respectively $vx$, with $uvx$);
so $S_u\sqcup S_v\subseteq R-x$ and
$|S|=1+|S_u|+|S_v|\le 1+b(R-x)\le 1+\max_{x'}b(R-x')$. Either way
$|S|\le p(R)$.
\end{proof}

\begin{lemma}[template]\label{lem:template}
Let $G$ be $7$-regular and $uv\in E(G)$ with $c=\cod(uv)\in\{5,6\}$; set
$t=3$ if $c=5$ and $t=1$ if $c=6$. If some $R\subseteq E(H)$ satisfies
\begin{equation}\label{eq:template}
  t+|R|+2q(R)\;\le\;2p(R),
\end{equation}
then $\{u,v\}$ is reducible in $G$.
\end{lemma}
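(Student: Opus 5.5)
We apply the local reducibility criterion, Corollary~\ref{cor:localcriterion}, to the local graph $L=L(G,uv)$. The packing comes from Lemma~\ref{lem:packing}. The transversal is built from $R$, from the spokes at a minimum vertex cover of $H-R$, and from $t$ forced edges at the hubs. Inequality~\eqref{eq:template} is then exactly condition~(i).

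\emph{The packing.} By Lemma~\ref{lem:packing}, choose a pairwise edge-disjoint family $S\subseteq\mathcal{T}(R)$ with $|S|=p(R)$. Every member of $\mathcal{T}(R)$ is a triangle of $G$ through $u$ or $v$, so by Lemma~\ref{lem:localcomplete}(a) it is a triangle of $L$.

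\emph{The transversal.} Let $Q\subseteq C$ be a minimum vertex cover of $H-R$, so $|Q|=q(R)$. Define
\[
  X \;=\; R\;\cup\;\{uy,\,vy : y\in Q\}\;\cup\;\{uv\}\;\cup\;\{ua : a\in A\}\;\cup\;\{vb : b\in B\}.
\]
All of these edges lie in $E(L)$. By Lemma~\ref{lem:disjoint}, $|A|=|B|=6-c$. So the last three families contribute $1+2(6-c)$ edges, which is $3$ when $c=5$ and $1$ when $c=6$; in both cases this is $t$. Hence
\[
  |X|\le t+|R|+2q(R)\le 2p(R)=2|S|,
\]
which gives condition~(i).

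\emph{Condition (ii).} Take a triangle of $L$ through $u$; the case of $v$ is symmetric.
\begin{itemize}
  \item If it contains $v$, it is $uvx$ with $x\in C$, and it contains $uv\in X$.
  \item Otherwise it is $uxy$ with $x,y\in C\cup A$. If one of $x,y$ lies in $A$, the triangle contains a spoke $ua\in X$.
  \item Otherwise $x,y\in C$ and $xy\in E(H)$. Either $xy\in R\subseteq X$, or $xy\in E(H-R)$. In the latter case one endpoint, say $y$, lies in $Q$, and then $uy\in X$.
\end{itemize}

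\emph{Condition (iii).} The triangles $uvx$ of $S$ have every edge incident with a hub. The triangles $uxy$ and $vxy$ of $S$ have exactly one edge avoiding $\{u,v\}$, namely the rim edge $xy$, and $xy\in R\subseteq X$.

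Corollary~\ref{cor:localcriterion} now shows that $\{u,v\}$ is reducible. I expect no real obstacle. The only point needing care is the count of forced hub edges: one must check that $uv$ together with the exclusive spokes at $A$ and $B$ number exactly $t$, and that these are what cover the triangles not handled by $R$ and $Q$.
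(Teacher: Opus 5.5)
Your proof is correct and follows the paper's argument essentially verbatim: the same packing from Lemma~\ref{lem:packing}, the same transversal $X$, and the same case analysis for conditions (ii) and (iii) of Corollary~\ref{cor:localcriterion}; the only difference is that you describe the $t$ forced hub edges uniformly as $\{uv\}\cup\{ua:a\in A\}\cup\{vb:b\in B\}$ instead of splitting into the cases $c=5$ and $c=6$. Bounding $|X|\le t+|R|+2q(R)$ rather than claiming equality also spares you the distinctness check that the paper carries out.
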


\begin{proof}
Choose a minimum vertex cover $Q$ of $H-R$, so $|Q|=q(R)$, and by
Lemma~\ref{lem:packing} choose an edge-disjoint $S\subseteq\mathcal{T}(R)$
with $|S|=p(R)$. Put
\[
  X=\begin{cases}
    \{uv,\,ua,\,vb\}\cup R\cup\{ux,vx : x\in Q\}, & c=5,\ A=\{a\},\ B=\{b\},\\[2pt]
    \{uv\}\cup R\cup\{ux,vx : x\in Q\}, & c=6,\ A=B=\emptyset.
  \end{cases}
\]
All listed edges lie in $E(L)$, and they are pairwise distinct: $uv$ joins the
two hubs, $ua$ and $vb$ join a hub to an exclusive neighbour, the elements of
$R$ join two vertices of $C$, and the spokes $ux,vx$ join a hub to $C$. Hence
$|X|=t+|R|+2q(R)$, and \eqref{eq:template} gives condition (i) of
Corollary~\ref{cor:localcriterion}.

Condition (ii). By Lemma~\ref{lem:localcomplete} it suffices to consider
triangles of $L$ containing $u$ or $v$; take one containing $u$, the case of
$v$ being symmetric. Its other two vertices lie in
$\N_L(u)=\{v\}\cup C\cup A$. The possibilities are:
\begin{itemize}
  \item $uvx$ with $x\in C$: it contains $uv\in X$.
  \item $uxy$ with $x,y\in C$ and $xy\in E(H)$: if $xy\in R$ then $xy\in X$;
    otherwise $xy$ is an edge of $H-R$, so $Q$ covers it and some endpoint,
    say $x$, lies in $Q$, whence $ux\in X$.
  \item $uax$ with $a\in A$, $x\in C$ (only when $c=5$): it contains
    $ua\in X$.
  \item There is no triangle $uab'$ with $a\in A,b'\in B$, since
    $B\cap \N(u)=\emptyset$; and none with two vertices of $A$, since
    $|A|\le1$ here.
\end{itemize}
Condition (iii). The members of $S$ are of the forms $uvx$, $uxy$, $vxy$ with
$xy\in R$. The edges of $uvx$ are $uv,ux,vx$, all incident with $\{u,v\}$. The
only edge of $uxy$ (respectively $vxy$) not incident with $\{u,v\}$ is
$xy\in R\subseteq X$. So every $S$-edge with both endpoints outside $\{u,v\}$
lies in $X$.

Corollary~\ref{cor:localcriterion} now applies.
\end{proof}

The template uses only triangles that meet $\{u,v\}$. At a complete core that
is not enough: no $R$ works, and a packing using triangles inside $C$ must be
produced by hand.

\section{Triangle codegree six}\label{sec:codeg6}

Here $c=6$, so $A=B=\emptyset$, $W=\{u,v\}\cup C$ has $8$ vertices, and $L$ is
determined by the graph $H=G[C]$ on the six labelled vertices of $C$: $u$ and
$v$ are each adjacent to all of $C$ and to each other. There are $2^{15}$
possibilities for $H$, and the template disposes of all but one.

\begin{proposition}[verified exhaustively]\label{prop:template6}
For every graph $H$ on six labelled vertices other than the complete graph
$K_6$, some $R\subseteq E(H)$ satisfies $1+|R|+2q(R)\le 2p(R)$. For $H=K_6$
no such $R$ exists.
\end{proposition}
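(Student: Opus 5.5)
The statement has two halves, and I would treat them differently: the negative half ($H=K_6$) by a hand argument, and the positive half by a structured search that is finally confirmed by machine. The basic bounds come from Lemma~\ref{lem:packing} and Definition~\ref{def:template}. On six vertices a union of two matchings has at most $6$ edges, and $6$ edges force a spanning $C_6$ (a simple graph cannot contain a doubled $K_2$). On five vertices such a union has at most $4$ edges. Hence $p(R)\le 6$ always, and $p(R)=6$ requires $C_6\subseteq R$. Trivially also $p(R)\le |R|+1$.

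\emph{The case $H=K_6$.} Given $R\subseteq E(K_6)$, put $\alpha=\alpha(K_6-R)$. An independent set of $K_6-R$ is a clique of $R$, so $|R|\ge\binom{\alpha}{2}$ and $q(R)=6-\alpha$. I would run through $\alpha$ and show $1+|R|+2q(R)>2p(R)$ in each case.
\begin{itemize}
\item For $\alpha\in\{5,6\}$ the left side is already at least $13>12\ge 2p$.
\item For $\alpha=4$ we need $p=6$, hence $C_6\subseteq R$ as well as a $K_4$. A $C_6$ has at most $3$ edges inside a $4$-set, so $|R|\ge 9$, and the left side is at least $14$.
\item For $\alpha=3$ we need $|R|\le 2p-7\le 5$. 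If $p=6$ then $C_6\subseteq R$; the $C_6$ is triangle-free, so the clique of $R$ adds an edge and $|R|\ge 7$. If $p\le5$ then $|R|\le 3$, so $R$ is exactly a triangle, and then $p=2$.
\item For $\alpha\le 2$ we have $q\ge4$. Combined with $p\le |R|+1$ this forces $|R|\ge 7$, and then the left side exceeds $12\ge 2p$.
\end{itemize}
This disposes of $K_6$ cleanly.

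\emph{The case $H\neq K_6$.} The inequality depends only on the isomorphism type of $H$, so I would reduce to the $156$ unlabelled graphs on six vertices. I would try a few canonical choices of $R$ first.
\begin{itemize}
\item $R=\emptyset$ gives $p=1$, so it succeeds whenever $\vc(H)=0$, that is, when $H$ is empty.
\item $R=E(H)$ gives $q=0$ and requires $1+|E(H)|\le 2p(E(H))$. This handles sparse $H$ whose edges pack well into two matchings, for instance paths, even cycles, and graphs with a spanning $C_6$ and at most $11$ edges.
\item $R$ equal to a spanning $C_6$ or a Hamiltonian path of $H$, together with $Q$ a small vertex cover of the remaining edges. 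This is the natural choice for dense $H$.
\end{itemize}
For dense graphs I would organize the cases by the complement $\overline{H}$, which is nonempty. For example, if $\overline{H}$ contains the edge $xy$ one can take $R$ to be a $C_6$ in $H$ through the non-edge's endpoints and cover the remainder cheaply, so $K_6-e$ and its supergraphs within the dense range go through.

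The main obstacle is the middle-density range. There neither $R=E(H)$ (too many edges) nor $R=C_6$ (the cover $Q$ is too large) obviously works, and the right $R$ is irregular. I would not try to find a uniform rule there. Instead I would rely on an exhaustive computation: for each labelled $H$, or each of the $156$ classes, enumerate all $R\subseteq E(H)$, compute $q(R)$ and $p(R)$ by brute force, and record a witness. This is the verification the paper labels as exhaustive. The hand argument above independently certifies that $K_6$ is the unique exception.
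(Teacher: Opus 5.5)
Your proof is correct, but it splits the statement differently from the paper. The paper certifies both halves by one enumeration, including the failure at $K_6$ over all $2^{15}$ choices of $R$. You prove the $K_6$ half by hand, and your case analysis on $\alpha=\alpha(K_6-R)$ is correct. It rests on three facts: $p\le 6$, $p=6\iff C_6\subseteq R$, and $p\le |R|+1$. This gives a readable reason why the hub-only template cannot handle the complete core, whereas the paper's uniform enumeration buys simplicity and the tightness count as a by-product.

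There is one slip. If $R$ is a triangle $c_0c_1c_2$ then $p(R)=3$, not $2$. Taking $x=z\notin\{c_0,c_1,c_2\}$ in $1+\max_x b(R-x)$ gives $1+b(R)=3$, realised by $uvz$, $uc_0c_1$, $vc_1c_2$. The case still closes, since $|R|=3\le 2p-7$ would need $p\ge 5>|R|+1$.

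The negative half is never used later: Proposition~\ref{prop:codeg6} needs only the positive half together with Lemma~\ref{lem:fano6}. So your hand argument is explanatory, and the load-bearing content is the positive half. There you end with the same exhaustive search over $R\subseteq E(H)$ as the paper.

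The heuristics you list before that search are not needed, but the dense-case one is false. Take $H=K_6-xy$ and $R$ a spanning hexagon, so $p=6$. Then $q(R)=6-\alpha(H-R)$, and an independent set of $H-R$ is a clique of $R\cup\{xy\}$. A $C_6$ has at most three edges inside any $4$-set, so $C_6+xy$ contains no $K_4$. Hence $q\ge 3$ and $1+6+6=13>12$. The same edge count shows that no $R$ with $p(R)=6$ works for $K_6-e$ at all.

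The actual witness is $R=K_4-xy$ on a $4$-set $I\supseteq\{x,y\}$. The two vertices outside $I$ cover $H-R$, so $q=2$. Also $R$ contains a $4$-cycle, so $p=1+b(R-z)=5$ for $z\notin I$. This gives the tight inequality $1+5+4=10=2p$.

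A correct rule of this kind is: if some $4$-set $I$ contains a non-edge of $H$ and $H[I]\supseteq C_4$, then $R=E(H[I])$ satisfies the inequality. Indeed $|R|\le 5$, $q\le 2$ and $p\ge 5$. So dense cores are settled by $p=5$ witnesses, not by spanning hexagons. Since you fall back on full enumeration, this does not break your proof, but the hexagon claim should not be asserted.
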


\emph{Verification.} The program \texttt{tests/check\_codegree6.py} enumerates
all $2^{15}=32{,}768$ labelled graphs $H$ on $C$ and, for each, all subsets
$R\subseteq E(H)$, computing $q(R)$ by minimising over all $2^6$ vertex
subsets and $b(R)$ by maximising over all pairs of edge-disjoint matchings of
$K_6$. It reports an explicit witness $R$ for each of the $32{,}767$
non-complete $H$, and confirms that $K_6$ admits none among all $2^{15}$
choices of $R$. For exactly $155$ of the non-complete cores the best possible
choice of $R$ meets the inequality with equality; every other core admits
positive slack.
\hfill$\square$

The complete core must therefore be handled by a witness that uses triangles
inside $C$, which the template cannot supply. Such a witness exists, and it is
a Fano plane.

\begin{lemma}[a Fano witness for $K_6$]\label{lem:fano6}
Let $G$ be $7$-regular and $uv\in E(G)$ with $\cod(uv)=6$ and $G[C]\cong K_6$.
Then $\{u,v\}$ is reducible.
\end{lemma}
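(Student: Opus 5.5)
We take $X=\{uv\}\cup E(C)$ and let $S$ be eight edge-disjoint triangles of $L\cong K_8$. Then $|X|=16=2|S|$, and Corollary~\ref{cor:localcriterion} applies.

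Since $\cod(uv)=6$ and $G[C]\cong K_6$, the local graph $L$ is the complete graph on $W=\{u,v\}\cup C$. By Corollary~\ref{cor:localcriterion} it suffices to exhibit an edge-disjoint family $S$ of triangles of $L$ and a set $X\subseteq E(L)$ satisfying (i)--(iii). Proposition~\ref{prop:template6} shows that no choice of $S\subseteq\mathcal{T}(R)$ works, so $S$ must contain triangles inside $C$. For the transversal we take
\[
  X=\{uv\}\cup E(C),\qquad |X|=1+15=16 .
\]
Condition (ii) is then immediate. A triangle $uvx$ contains $uv$, and a triangle $uxy$ or $vxy$ with $x,y\in C$ contains the rim edge $xy\in E(C)\subseteq X$. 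Condition (iii) is automatic for every choice of $S$, because every edge with both endpoints outside $\{u,v\}$ lies in $E(C)\subseteq X$. So the whole problem reduces to condition (i), which asks for $|S|\ge 8$, that is, a packing of eight edge-disjoint triangles in $K_8$.

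Such a packing is the extremal case $\nu(K_8)=8$. Eight triangles use $24$ of the $28$ edges. Every vertex of $K_8$ has odd degree $7$, while each triangle uses an even number of edges at each vertex, so the $4$ unused edges must form a perfect matching. Hence we look for a triangle decomposition of $K_8$ minus a perfect matching, i.e.\ of $K_{2,2,2,2}$: a $3$-GDD of type $2^4$, essentially the Fano-type structure alluded to in the lemma's title.

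We take the groups to be $\{u,v\},\{1,2\},\{3,4\},\{5,6\}$, with $C=\{1,\dots,6\}$. Then we write the packing down explicitly:
\[
  u14,\ u25,\ u36,\qquad v16,\ v23,\ v45,\qquad 135,\ 246 .
\]
The remaining verification is a finite check that these eight triangles are pairwise edge-disjoint. The $u$-spokes $14,25,36$ form a matching of $C$, and so do the $v$-spokes $16,23,45$. These six rim edges are distinct from each other and from the six edges $13,15,35,24,26,46$ of the two inner triangles. The spokes at $u$ and at $v$ are each used exactly once, and $uv$ is not used at all.

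The only real obstacle is realising a packing of size $8$, which meets $\nu(K_8)$ exactly. The parity argument above dictates its structure, and the explicit list settles it. Everything else follows from Corollary~\ref{cor:localcriterion} and Lemma~\ref{lem:localcomplete}, which transfer the local certificate to the ambient graph $G$.
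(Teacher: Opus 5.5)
Your proof is correct, and it uses a different witness from the paper's.

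\emph{Your argument checks out.} The eight triangles $u14,u25,u36,v16,v23,v45,135,246$ use each hub spoke exactly once. They also use each edge of $C$ other than $12,34,56$ exactly once, so they are edge-disjoint. The set $X=\{uv\}\cup E(C)$ meets every triangle through a hub and contains every edge avoiding the hubs, and $|X|=16=2|S|$.

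\emph{The paper's witness.} Its packing is the seven-line Fano plane on $\{u,v,c_0,\dots,c_4\}$, which never touches the sixth core vertex $z$. Its transversal consists of the ten edges among $c_0,\dots,c_4$ together with $uv,uz,vz$. Killing the triangles at $z$ through its two hub spokes, rather than its five rim edges, saves three edges. Hence seven triangles suffice, and the certificate has one unit of slack ($13\le 14$).

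\emph{What each route buys.} Because the paper's packing avoids $z$, the same $(S,X)$ pattern, with $uz,vz$ replaced by $ua,vb$, also settles the $K_5$-core case at codegree five (Lemma~\ref{lem:fano5}). There $C$ has only five vertices, and your eight-triangle packing of $K_8$ is unavailable. Your route instead gives simpler verification: conditions (ii) and (iii) hold for every choice of $S$, so the lemma reduces to the single fact $\nu(K_8)\ge 8$.

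\emph{A naming correction.} The triangle decomposition of $K_{2,2,2,2}$ you use is the affine plane $AG(2,3)$ with one point deleted, not a Fano plane. The Fano structure in the lemma's title is the paper's seven-point packing.
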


\begin{proof}
Write $C=\{c_0,c_1,c_2,c_3,c_4,z\}$. Since $G[C]=K_6$ and $u,v$ are adjacent
to everything in $C$ and to each other, $L$ is the complete graph on
$W=\{u,v\}\cup C$, i.e.\ $L\cong K_8$. Take
\[
  S=\{\,uvc_0,\ uc_1c_2,\ uc_3c_4,\ vc_1c_3,\ vc_2c_4,\ c_0c_1c_4,\
        c_0c_2c_3\,\},
\]
\[
  X=\{c_ic_j : 0\le i<j\le 4\}\cup\{uv,\ uz,\ vz\}.
\]
The seven triangles of $S$ are the lines of a Fano plane on the seven points
$\{u,v,c_0,\dots,c_4\}$: they use the $21$ edges
\[
  \begin{array}{l}
  uv,uc_0,vc_0;\quad uc_1,uc_2,c_1c_2;\quad uc_3,uc_4,c_3c_4;\quad
  vc_1,vc_3,c_1c_3;\\
  vc_2,vc_4,c_2c_4;\quad c_0c_1,c_1c_4,c_0c_4;\quad c_0c_2,c_2c_3,c_0c_3,
  \end{array}
\]
which are pairwise distinct; so $S$ is edge-disjoint and $|S|=7$. Also
$|X|=10+3=13\le 14=2|S|$.

Condition (ii) of Corollary~\ref{cor:localcriterion}: a triangle of $L$
containing $u$ is $uxy$ with $x,y\in \N_L(u)=\{v\}\cup C$. If $v\in\{x,y\}$ it
contains $uv\in X$. If $x,y\in\{c_0,\dots,c_4\}$ it contains
$xy\in X$. Otherwise $z\in\{x,y\}$ and the triangle contains $uz\in X$.
Symmetrically for $v$, using $vz\in X$.

Condition (iii): the $S$-edges with both endpoints outside $\{u,v\}$ are
$c_1c_2$, $c_3c_4$, $c_1c_3$, $c_2c_4$, $c_0c_1$, $c_1c_4$, $c_0c_4$,
$c_0c_2$, $c_2c_3$, $c_0c_3$, all of which lie in
$\{c_ic_j: 0\le i<j\le 4\}\subseteq X$.
\end{proof}

Note that two of the seven triangles of $S$, namely $c_0c_1c_4$ and
$c_0c_2c_3$, lie wholly inside $C$ and meet neither hub; by
Proposition~\ref{prop:template6} the template of
Section~\ref{sec:template} cannot supply a witness avoiding them.

\begin{proposition}\label{prop:codeg6}
Let $G$ be $7$-regular and let $uv\in E(G)$ with $\cod(uv)=6$. Then
$\{u,v\}$ is reducible.
\end{proposition}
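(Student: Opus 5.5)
The plan is a two-way case split on the core $H=G[C]$, using only results already established. Since $\cod(uv)=6$, Lemma~\ref{lem:disjoint} gives $A=B=\emptyset$ and $|C|=6$. The local graph $L$ therefore has vertex set $\{u,v\}\cup C$. In $L$, the hubs $u$ and $v$ are adjacent to each other and to all of $C$, and the remaining edges are exactly those of $H$ on the six labelled vertices of $C$. Up to relabelling $C$, $H$ is one of the $2^{15}$ graphs enumerated in Proposition~\ref{prop:template6}.

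\emph{Case 1: $H\not\cong K_6$.} Proposition~\ref{prop:template6} supplies a set $R\subseteq E(H)$ with $1+|R|+2q(R)\le 2p(R)$. This is exactly inequality~\eqref{eq:template} with $t=1$, the value prescribed for $c=6$. Lemma~\ref{lem:template} then yields that $\{u,v\}$ is reducible.

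Two points need checking here.
\begin{itemize}
\item The proposition is stated for labelled graphs on a fixed six-set. Identifying $C$ with that set by any bijection transports $H$ and its witness $R$, because $q$, $b$ and $p$ are isomorphism invariants.
\item The template lemma needs no hypothesis beyond $7$-regularity and the codegree, and both hold here.
\end{itemize}

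\emph{Case 2: $H\cong K_6$.} Then $G[C]$ is complete, and Lemma~\ref{lem:fano6} applies directly to give that $\{u,v\}$ is reducible.

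Neither case contains a real obstacle: the substantive content sits in the exhaustive check behind Proposition~\ref{prop:template6} and in the Fano construction of Lemma~\ref{lem:fano6}. The one point that needs care is that the two cases are exhaustive. Every graph on six vertices is either complete or not, and the non-complete ones are precisely the $32{,}767$ cores for which the verification reports a witness.
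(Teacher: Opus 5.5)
Your proposal is correct and follows the paper's proof exactly: split on whether $G[C]$ is complete, use Proposition~\ref{prop:template6} with Lemma~\ref{lem:template} at $t=1$ in the non-complete case, and use Lemma~\ref{lem:fano6} for the complete core. Your remarks on transporting the labelled witness $R$ and on the exhaustiveness of the case split are sound, and they only make explicit what the paper leaves implicit.
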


\begin{proof}
If $G[C]\ne K_6$, apply Proposition~\ref{prop:template6} and
Lemma~\ref{lem:template} with $t=1$. If $G[C]=K_6$, apply
Lemma~\ref{lem:fano6}.
\end{proof}

Proposition~\ref{prop:codeg6} is unconditional: it uses no hypothesis on the
links, only $7$-regularity.

\section{Triangle codegree five}\label{sec:codeg5}

Here $c=5$, so $A=\{a\}$ and $B=\{b\}$ are singletons, $W$ has $9$ vertices,
and $L$ is determined by the triple
\[
  \bigl(H,\ \N(a)\cap C,\ \N(b)\cap C\bigr),\qquad H=G[C].
\]
The possible edge $ab$ is an $A$--$B$ edge and is therefore absent from $L$;
by Lemma~\ref{lem:localcomplete} this is harmless. Note that the link of $u$ is
$H_u=G[\N(u)]=G[\{v\}\cup C\cup\{a\}]$, a graph on seven vertices in which
$v$ is adjacent to all of $C$ and not to $a$, $C$ induces $H$, and $a$ is
adjacent exactly to $\N(a)\cap C$. So $H_u$ is determined by $H$ and
$\N(a)\cap C$; write $H_u=\Lambda(H,\N(a)\cap C)$. Symmetrically
$H_v=\Lambda(H,\N(b)\cap C)$.

The template now leaves a short list of exceptions, and the link structure of
a minimal counterexample removes all but one of them.

\begin{proposition}[verified exhaustively]\label{prop:template5}
Exactly $32$ of the $1{,}024$ labelled graphs $H$ on the five vertices of $C$
admit no $R\subseteq E(H)$ with $3+|R|+2q(R)\le 2p(R)$. They form five
isomorphism classes:
\[
  \overline{K_5}\ (\text{edgeless}),\quad K_{1,4},\quad K_3\sqcup K_2,\quad
  \text{the bowtie } K_1\vee 2K_2,\quad K_5,
\]
with $1+5+10+15+1=32$ labelled copies respectively.
\end{proposition}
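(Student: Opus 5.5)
The statement is a finite classification, so the plan is an exhaustive computer check of the same kind as Proposition~\ref{prop:template6}, supported by a hand analysis that explains the five exceptional classes and confirms the count. For each of the $2^{10}=1{,}024$ labelled graphs $H$ on $C$, I would enumerate every $R\subseteq E(H)$ (at most $2^{10}$ of them). For each $R$ I would compute $q(R)=\vc(H-R)$ by brute force over the $2^5$ vertex subsets. I would compute $b(R)$ as the largest subgraph of $R$ with maximum degree at most $2$ and no odd cycle, again by brute force over subsets of $R$. From these I would form $p(R)$ and test $3+|R|+2q(R)\le 2p(R)$. The program records a witness $R$ whenever one exists and lists the graphs $H$ for which none does. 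I would then group the exceptional graphs into isomorphism classes, for instance by canonical relabelling over all $5!$ permutations, and check that the labelled counts are $1,5,10,15,1$. These counts are just $5!/|\mathrm{Aut}|$, with automorphism groups of orders $120$, $24$, $12$, $8$ and $120$.

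As an independent sanity check on the list, I would verify by hand why each listed graph fails. Since $p(R)\le 1+b(R)$, we have $2p(R)\le 2+2b(R)$, and $b(R)\le|R|$ always, with $b(R)\le 4$ on five vertices except that a $4$-cycle plus a pendant is impossible; the maximum is $4$. For the edgeless graph, only $R=\emptyset$ is available, and $3>2p=2$. For $K_{1,4}$, $b(R)\le 2$ and $q(R)\ge 1$ unless $R=E(H)$. If $R=E(H)$, the inequality requires $3+4\le 2p\le 2\cdot 3$, which fails. Otherwise the cases are short, and the other classes are handled similarly. For $K_5$, odd cycles and the large cover number kill every choice. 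I would also want the converse direction to be plausible by hand for typical dense graphs: for a $C_5$ or a $4$-cycle with a chord, choosing $R$ to be an even cycle or a long path gives $b(R)=|R|$ and $q$ small.

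The main obstacle is the converse direction, namely that every other graph admits a witness. There are dozens of isomorphism classes, and a hand proof would need a careful case split on $\vc(H)$ and the matching structure. I would rely on the exhaustive enumeration for this part and present the hand arguments only for the five exceptions, as a consistency check of the program.
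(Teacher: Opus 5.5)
Your proposal is correct and is essentially the paper's own verification: \texttt{tests/check\_codegree5.py} likewise enumerates all $2^{10}$ labelled cores and every $R\subseteq E(H)$, computes $q$ and $p$ directly from Definition~\ref{def:template}, and sorts the failures into $S_5$-orbits. Your hand checks of the exceptions and the orbit sizes $5!/|\mathrm{Aut}|$ are correct additions the paper does not include; one caveat on your plausibility remark is that for $C_5$ the longest path ($4$ edges) actually fails, since $3+4+2=9>8$, whereas a $3$-edge path or the whole cycle attains equality $8\le 8$.
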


\emph{Verification.} \texttt{tests/check\_codegree5.py} enumerates all $2^{10}$
labelled $H$ and all $R\subseteq E(H)$, computing $q$ and $p$ from their
definitions, and reports the failure set and its decomposition into
isomorphism classes under the action of $S_5$. \hfill$\square$

\begin{remark}
The bowtie $K_1\vee 2K_2$ (two triangles sharing a vertex, six edges) has $15$
labelled copies on five vertices, as does $K_1\vee C_4$ (the wheel $W_4$,
eight edges). They are distinct graphs, and the exceptional class here is the
bowtie.
\end{remark}

\begin{proposition}[verified exhaustively]\label{prop:exceptional5}
Let $H$ be one of the $31$ labelled graphs of Proposition~\ref{prop:template5}
other than $K_5$, and let $\emptyset\ne D\subseteq C$. Then
$\Lambda(H,D)\in\WKE$. If $D=\emptyset$ then $\Lambda(H,D)$ is disconnected.
\end{proposition}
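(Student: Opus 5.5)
The plan is to treat the last sentence directly, and then to give, for each of the four isomorphism classes, an explicit pair $(M,Q)$ witnessing $\Lambda(H,D)\in\WKE$ in the sense of Definition~\ref{def:wke}. Witnessing is invariant under relabelling, so it suffices to treat one labelled representative per class with an arbitrary nonempty $D$. The computer check would then serve only as a confirmation.

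If $D=\emptyset$, the vertex $a$ has no neighbours in $\Lambda(H,D)$, because $a$ is adjacent exactly to $D$ and not to $v$. Since $\Lambda(H,D)$ has seven vertices, $a$ is an isolated vertex of a graph with at least two vertices, so $\Lambda(H,D)$ is disconnected.

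For $D\ne\emptyset$, every edge of $\Lambda=\Lambda(H,D)$ is of one of three kinds: a spoke $vx$ with $x\in C$, an edge $ad$ with $d\in D$, or an edge of $H$. The common device is to put $v$, and usually $a$, into $Q$. Then only the edges of $H$ remain to be handled, either by further vertices of $Q$ or by edges of $M$. I would then pick $M$ among edges disjoint from these choices, using the spokes $vx$ and the edges $ad$ to pad $|M|$ up to $|Q|$.

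\begin{itemize}
  \item \emph{Edgeless $H$.} Fix $d\in D$ and $x\in C\setminus\{d\}$. Take $M=\{ad,vx\}$ and $Q=\{v,a\}$. Then $Q$ covers every edge of $\Lambda$.
  \item \emph{$K_3\sqcup K_2$, with triangle $xyz$ and edge $pq$.} Take $M=\{pq,yz,vx\}$ and $Q=\{v,x,a\}$. The edges of $\Lambda-M$ are the spokes, which $v$ covers; the edges $xy$ and $xz$, which $x$ covers; and the edges at $a$, which $a$ covers.
  \item \emph{Bowtie, with centre $y$ and triangles $yx_1x_2$ and $yx_3x_4$.} Take $M=\{x_1x_2,x_3x_4,vy\}$ and $Q=\{v,y,a\}$. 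Every remaining edge meets $v$, $y$ or $a$.
  \item \emph{$K_{1,4}$, with centre $y$.} If $D\ne\{y\}$, choose $d\in D\setminus\{y\}$ and two further leaves $x,w\ne d$. Take $M=\{ad,vx,yw\}$ and $Q=\{v,y,a\}$. If $D=\{y\}$, the only edge at $a$ is $ay$, which $y$ covers. In that case take $Q=\{v,y\}$ and $M=\{vx,ay\}$ for any leaf $x$.
\end{itemize}

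In every case $M$ is a matching with $|Q|\le |M|$, and $Q$ is a vertex cover of $\Lambda-M$.

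The only delicate point is the star. There the natural cover $\{v,y,a\}$ needs a matching of size three, and such a matching can fail to exist when $D=\{y\}$. This is why that subcase is split off and $a$ is dropped from $Q$. In all other cases the verification is a one-line check that each edge not in $M$ meets $Q$.
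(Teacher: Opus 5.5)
Your proof is correct, and it takes a different route from the paper. For $D=\emptyset$ you argue exactly as the paper does. For nonempty $D$, however, the paper gives no argument at all: \texttt{tests/check\_codegree5.py} tests all $31\times31=961$ labelled pairs $(H,D)$ directly against Definition~\ref{def:wke}. You replace that enumeration with explicit witnesses, and each one checks out. In every case $M$ consists of edges of $\Lambda(H,D)$ with pairwise disjoint vertex sets, $|Q|=|M|$, and every edge outside $M$ meets $Q$. Your remark about the star with $D=\{y\}$ is also right: every edge meets $\{v,y\}$, so the matching number is two, which is why $a$ must be dropped from $Q$ there. The witnesses also show the common mechanism. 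Each of the four exceptional cores becomes edgeless or a star after deleting a suitable matching $M_H\subseteq E(H)$. So $Q$ can be taken inside $\{v,a\}$ together with at most one vertex of $C$, and spokes and edges at $a$ pad $M\supseteq M_H$ up to $|Q|$. Your route gives a human-readable proof that handles all nonempty $D$ at once. It leaves Section~\ref{sec:codeg5} depending on computation only through the classification in Proposition~\ref{prop:template5}. The paper's route is uniform with its other finite checks, and it also certifies the contrasting fact that for $K_5$ every nonempty $D$ gives a connected non-$\WKE$ link. Your argument does not cover that fact, but it is not part of the statement.
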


\emph{Verification.} The second sentence is immediate: if $D=\emptyset$ then
the vertex $a$ is isolated in $\Lambda(H,D)$, which has seven vertices. For the
first, \texttt{tests/check\_codegree5.py} tests all $31\times 31=961$ pairs
$(H,D)$ against Definition~\ref{def:wke} directly and finds every resulting
link to be in $\WKE$. (By contrast, for $H=K_5$ all $31$ nonempty $D$ give a
connected non-$\WKE$ link, so $K_5$ must be handled separately.)
\hfill$\square$

\begin{lemma}[a Fano witness for $K_5$]\label{lem:fano5}
Let $G$ be $7$-regular and $uv\in E(G)$ with $\cod(uv)=5$ and $G[C]\cong K_5$.
Then $\{u,v\}$ is reducible.
\end{lemma}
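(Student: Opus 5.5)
The plan is to copy the Fano construction of Lemma~\ref{lem:fano6}, with the five vertices of $C$ now playing the role of $c_0,\dots,c_4$ and no extra vertex $z$. The two exclusive neighbours $a\in A$ and $b\in B$ are then dealt with by single hub edges $ua$ and $vb$, exactly as in the template of Lemma~\ref{lem:template}. We need no information about $\N(a)\cap C$ or $\N(b)\cap C$, and no hypothesis on the links.

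\textbf{Setup.} Write $C=\{c_0,c_1,c_2,c_3,c_4\}$, $A=\{a\}$ and $B=\{b\}$. Since $G[C]\cong K_5$ and $u,v$ are adjacent to each other and to all of $C$, the subgraph of $L$ induced on the seven points $P=\{u,v,c_0,\dots,c_4\}$ is $K_7$. Every Fano plane on $P$ decomposes this $K_7$ into seven edge-disjoint triangles. I would take the same lines as before:
\[
  S=\{\,uvc_0,\ uc_1c_2,\ uc_3c_4,\ vc_1c_3,\ vc_2c_4,\ c_0c_1c_4,\ c_0c_2c_3\,\}.
\]
I would then check that these seven triangles use $21$ distinct edges, which is the same edge list already displayed in the proof of Lemma~\ref{lem:fano6}. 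Hence $S$ is edge-disjoint and $|S|=7$.

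\textbf{Transversal.} Put
\[
  X=\{c_ic_j:0\le i<j\le4\}\cup\{uv,\ ua,\ vb\}.
\]
These are $13$ distinct edges of $L$, so $|X|=13\le14=2|S|$. This is condition (i) of Corollary~\ref{cor:localcriterion}.

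\textbf{Condition (ii).} A triangle of $L$ through $u$ has its other two vertices in $\N_L(u)=\{v\}\cup C\cup\{a\}$, and $a,v$ are not adjacent. So it has one of three forms:
\begin{itemize}
  \item $uvx$ with $x\in C$, which contains $uv\in X$;
  \item $uxy$ with $x,y\in C$, which contains $xy\in X$;
  \item $uax$ with $x\in\N(a)\cap C$, which contains $ua\in X$.
\end{itemize}
Triangles through $v$ are handled symmetrically, using $vb$ in place of $ua$. Any $A$--$B$ edge is absent from $L$, and by Lemma~\ref{lem:localcomplete} this is harmless.

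\textbf{Condition (iii).} Every $S$-edge avoiding $\{u,v\}$ joins two vertices of $C$, so it lies in $X$.

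Corollary~\ref{cor:localcriterion} then gives reducibility of $\{u,v\}$.

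The only step needing care is the edge-disjointness of the Fano lines. This was already verified in Lemma~\ref{lem:fano6}, so I expect no real obstacle. The budget even has one unit of slack: the $z$-spokes of the codegree-six case are replaced by the two spokes $ua,vb$ to the exclusive neighbours.
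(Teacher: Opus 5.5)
Your proposal is correct and is essentially the paper's own proof: the same seven Fano triangles $S$, the same transversal $X=E(G[C])\cup\{uv,ua,vb\}$ with $|X|=13\le 14=2|S|$, and the same case check for condition (ii), which uses $av\notin E(G)$ (and symmetrically $bu\notin E(G)$). One minor point: the single unit of slack is no better than in the codegree-six Fano witness, which also has $|X|=13$, but this does not affect the argument.
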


\begin{proof}
Write $C=\{c_0,\dots,c_4\}$, $A=\{a\}$, $B=\{b\}$. Take
\[
  S=\{\,uvc_0,\ uc_1c_2,\ uc_3c_4,\ vc_1c_3,\ vc_2c_4,\ c_0c_1c_4,\
        c_0c_2c_3\,\},
  \qquad
  X=E(G[C])\cup\{uv,\ ua,\ vb\}.
\]
All members of $S$ are triangles of $L$: $u$ and $v$ are adjacent to all of
$C$ and to each other, and $G[C]=K_5$. Exactly as in the proof of
Lemma~\ref{lem:fano6}, $S$ is edge-disjoint with $|S|=7$, and
$|X|=10+3=13\le 14=2|S|$.

Condition (ii): a triangle of $L$ containing $u$ has its other two vertices in
$\N_L(u)=\{v\}\cup C\cup\{a\}$. If $v$ is one of them, it contains $uv\in X$;
if $a$ is one of them, it contains $ua\in X$; otherwise both lie in $C$ and it
contains an edge of $G[C]\subseteq X$. Note that $a$ and $v$ cannot both occur,
since $av\notin E(G)$. Symmetrically for $v$, using $vb$.

Condition (iii): every $S$-edge not incident with $\{u,v\}$ lies inside $C$,
hence in $E(G[C])\subseteq X$.
\end{proof}

\begin{proposition}\label{prop:codeg5}
Let $G$ be a $7$-regular graph satisfying the standing hypothesis of
Section~\ref{sec:minimal}, and let $uv\in E(G)$ with $\cod(uv)=5$. Then
$\{u,v\}$ is reducible.
\end{proposition}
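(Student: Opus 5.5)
The plan is a case split on the core $H=G[C]$, a labelled graph on the five vertices of $C$.

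\emph{Case 1: $H$ is not one of the $32$ exceptional graphs of Proposition~\ref{prop:template5}.} Then Proposition~\ref{prop:template5} supplies some $R\subseteq E(H)$ with $3+|R|+2q(R)\le 2p(R)$. This is exactly inequality~\eqref{eq:template} with $t=3$, so Lemma~\ref{lem:template} shows that $\{u,v\}$ is reducible. This case needs only $7$-regularity.

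\emph{Case 2: $H\cong K_5$.} Lemma~\ref{lem:fano5} applies directly and gives reducibility. Again only $7$-regularity is used.

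\emph{Case 3: $H$ is one of the remaining $31$ exceptional labelled graphs.} Here I would use the standing hypothesis to show the case cannot occur. Put $D=\N(a)\cap C$. As noted before Proposition~\ref{prop:template5}, the link of $u$ is $H_u=\Lambda(H,D)$. By Lemma~\ref{lem:linkstructure}, $H_u$ is connected and $H_u\notin\WKE$.
\begin{itemize}
\item If $D=\emptyset$, Proposition~\ref{prop:exceptional5} says $\Lambda(H,D)$ is disconnected, contradicting connectivity of $H_u$.
\item If $D\ne\emptyset$, the same proposition gives $\Lambda(H,D)\in\WKE$, contradicting $H_u\notin\WKE$.
\end{itemize}
So Case~3 is impossible, which proves the proposition.

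I could argue symmetrically through $v$ and $B$, but one hub suffices. In fact, Case~3 also yields reducibility directly, without routing through the contradiction: if $H_u\in\WKE$, then $\{u\}$ is reducible by Lemma~\ref{lem:puleo44}. Either way, this case contradicts Lemma~\ref{lem:norreducible}, so reducibility of $\{u,v\}$ holds vacuously or trivially.

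\emph{Main obstacle.} There is essentially none, because the heavy lifting sits in the two verified propositions. The one point to check is that $H_u$ really equals $\Lambda(H,D)$ as a labelled graph. This holds because $\N(u)=\{v\}\cup C\cup\{a\}$, $v$ is adjacent to all of $C$ but not to $a$, and $a$'s neighbours within $\N(u)\setminus\{v\}$ are exactly $D$. Hence the finite check of Proposition~\ref{prop:exceptional5} applies verbatim.
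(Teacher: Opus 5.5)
Your proof is correct and follows the paper's argument essentially verbatim: Lemma~\ref{lem:template} with $t=3$ for non-exceptional cores, the Fano witness of Lemma~\ref{lem:fano5} for $K_5$, and Proposition~\ref{prop:exceptional5} with Lemma~\ref{lem:linkstructure} to rule out the other $31$ exceptional cores. The aside at the end of Case~3 works only through the contradiction with Lemma~\ref{lem:norreducible}, because reducibility of $\{u\}$ does not by itself give reducibility of $\{u,v\}$; so drop the phrase ``or trivially''.
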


\begin{proof}
Put $H=G[C]$. By Lemma~\ref{lem:linkstructure} the link
$H_u=\Lambda(H,\N(a)\cap C)$ is connected and not in $\WKE$. If $H$ were one
of the $31$ labelled exceptional graphs of
Proposition~\ref{prop:template5} other than $K_5$, then
Proposition~\ref{prop:exceptional5} would make $H_u$ either disconnected or a
member of $\WKE$ --- a contradiction. So either $H$ admits a template
$R$ satisfying $3+|R|+2q(R)\le 2p(R)$, and Lemma~\ref{lem:template} applies
with $t=3$; or $H\cong K_5$, and Lemma~\ref{lem:fano5} applies.
\end{proof}

Unlike Proposition~\ref{prop:codeg6}, Proposition~\ref{prop:codeg5} uses the
hypothesis that the links are connected and non-$\WKE$, and it uses it only
through Proposition~\ref{prop:exceptional5}, to exclude four core types. If
the boundary set $\N(a)\cap C$ is nonempty for one of those cores, the link is
$\WKE$ and Puleo's lemma already makes $\{u\}$ reducible; if it is empty, the
link is disconnected.

\section{Triangle codegree four}\label{sec:codeg4}

Here $c=4$, so $|A|=|B|=2$ and $W$ has $10$ vertices. Write
$C=\{c_0,c_1,c_2,c_3\}$, $A=\{a_0,a_1\}$, $B=\{b_0,b_1\}$. The local graph
$L$ is determined by the triple $(\kappa,\lambda,\rho)$ where
\begin{itemize}
  \item $\kappa$ records the $\binom{4}{2}=6$ possible edges inside $C$;
  \item $\lambda$ records the $8$ possible $A$--$C$ edges together with the
    edge $a_0a_1$, so $9$ bits; and
  \item $\rho$ records the $8$ possible $B$--$C$ edges together with $b_0b_1$,
    again $9$ bits.
\end{itemize}
The link of $u$ is $H_u=G[\{v\}\cup C\cup A]$, a graph on seven vertices
determined by $(\kappa,\lambda)$: $v$ is adjacent to all of $C$ and to no
vertex of $A$. Symmetrically $H_v$ is determined by $(\kappa,\rho)$.

For $c=4$ the template does not cover all relevant local graphs, so we
enumerate the configurations satisfying the link and degree constraints.
Call a triple $(\kappa,\lambda,\rho)$ \emph{admissible} if

\begin{enumerate}
  \item[(A1)] the links determined by $(\kappa,\lambda)$ and by
    $(\kappa,\rho)$ are both connected and not in $\WKE$; and
  \item[(A2)] the degree budget of Lemma~\ref{lem:budget} holds, i.e.\
    $\dg_\kappa(x)+\dg_\lambda(x)+\dg_\rho(x)\le 5$ for every $x\in C$, where
    $\dg_\kappa(x)$ is the number of neighbours of $x$ inside $C$ recorded by
    $\kappa$, and $\dg_\lambda(x)$, $\dg_\rho(x)$ are the numbers of its
    neighbours in $A$ and in $B$ recorded by $\lambda$ and $\rho$.
\end{enumerate}

By Lemma~\ref{lem:budget}, condition (A2) holds automatically for the triple
arising from any codegree-four edge of any $7$-regular graph; and (A1) is
exactly the hypothesis of Proposition~\ref{prop:codeg4} below, which by
Lemma~\ref{lem:linkstructure} is satisfied in a graph obeying the standing
hypothesis of Section~\ref{sec:minimal}.

Let $\Gamma=(S_4\times S_2\times S_2)\rtimes S_2$ act on triples, where $S_4$
permutes $C$ (simultaneously in $\kappa$, $\lambda$ and $\rho$), the two
copies of $S_2$ exchange $a_0\leftrightarrow a_1$ and
$b_0\leftrightarrow b_1$, and the outer $S_2$ exchanges the hubs, sending
$(\kappa,\lambda,\rho)$ to $(\kappa,\rho,\lambda)$. Every element of $\Gamma$
is just a relabelling of $C$, a relabelling within $A$ or $B$, or the exchange
of $(u,A)$ with $(v,B)$. It is therefore induced by an isomorphism of local
graphs fixing $\{u,v\}$ setwise, so admissibility and reducibility are
$\Gamma$-invariant.

\begin{proposition}[verified exhaustively; explicit certificates]
\label{prop:codeg4}
Let $G$ be a $7$-regular graph and $uv\in E(G)$ with $\cod(uv)=4$ such that
both links $G[\N(u)]$ and $G[\N(v)]$ are connected and not in $\WKE$. Then
$\{u,v\}$ is reducible.
\end{proposition}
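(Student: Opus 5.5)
The proof will reduce the statement to a finite check on the local graph $L=L(G,uv)$ and then certify every case. By Lemma~\ref{lem:localcomplete} and Corollary~\ref{cor:localcriterion}, it suffices to exhibit, inside $L$, an edge-disjoint triangle family $S$ and an edge set $X\subseteq E(L)$ satisfying (i)--(iii) of the corollary. The graph $L$ is determined by the triple $(\kappa,\lambda,\rho)$, and the hypotheses of the proposition together with Lemma~\ref{lem:budget} say exactly that this triple is admissible. Since admissibility and reducibility are $\Gamma$-invariant, it is enough to treat one representative of each $\Gamma$-orbit of admissible triples.

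\textbf{Step 1: enumerate the admissible orbits.} There are $2^{6}\cdot 2^9\cdot 2^9=2^{24}$ triples. First filter the pairs $(\kappa,\lambda)$ whose link (on $v$, $C$, $A$) is connected and not in $\WKE$, using the $7$-vertex census behind Lemma~\ref{lem:B1}. Then pair up compatible $(\kappa,\lambda)$ and $(\kappa,\rho)$ sharing the same $\kappa$, and impose (A2). Finally reduce modulo $\Gamma$ using a canonical form. The paper announces $1{,}144$ classes, and I expect this enumeration to reproduce that number.

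\textbf{Step 2: certify each class.} For each representative, first try the codegree-four analogue of the template of Section~\ref{sec:template}. The packing uses triangles $uvx$, $uxy$, $vxy$ and triangles $uax$, $vbx$ through the exclusive neighbours. The cover $X$ contains $uv$, a set $R$ of rim edges, spokes at a vertex cover of the remaining core edges, and spokes $ua$ or rim edges $ax$ to kill triangles at $A$ and $B$.

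When the template fails, which will likely happen for dense cores near $K_4$ with heavy attachments, search directly. Run an integer program or exhaustive search over edge-disjoint families $S$ of triangles of $L$. These may include triangles avoiding the hubs, inside $C\cup A$, $C\cup B$, or $C$, in the spirit of the Fano witnesses of Lemmas~\ref{lem:fano6} and~\ref{lem:fano5}. For each $S$, take the forced set $X\supseteq\{$non-hub $S$-edges$\}$ and complete it to a minimum hitting set of the hub triangles of $L-X$. Accept the pair when $|X|\le 2|S|$.

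\textbf{Step 3: verify the certificates independently.} Each certificate is a pair $(S,X)$ attached to its class. A separate checker confirms that the triangles of $S$ are triangles of $L$ and pairwise edge-disjoint, that $|X|\le2|S|$, that every triangle of $L$ through $u$ or $v$ meets $X$, and that condition (iii) holds. It also confirms that the list of classes covers every admissible triple up to $\Gamma$. Transfer to $G$ is then immediate from Corollary~\ref{cor:localcriterion}.

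\textbf{Main obstacle.} The hard step is Step 2: guaranteeing that every admissible class actually has a witness. Condition (iii) forces every hub-avoiding edge of $S$ into $X$, so packings inside $C\cup A$ are expensive. The inequality may be tight or fail for some classes. If a class has no witness, I would first enlarge the allowed family $S$ to include triangles using outside structure through $A$ or $B$; this is legitimate only if those edges are among those retained in $L$. Failing that, I would fall back on the reducible \emph{edge} sets of Lemma~\ref{lem:puleo44} applied to a suitable subset of the $A$-side spokes.
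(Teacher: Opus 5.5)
Your plan is essentially the paper's proof. You reduce via Corollary~\ref{cor:localcriterion} to one certificate $(S,X)$ on $L$ per $\Gamma$-orbit of admissible triples, find the certificates by an integer-constraint search that may use hub-avoiding triangles, and have an independent checker confirm (i)--(iii) and that the certificate keys are exactly the $1{,}144$ derived orbits. The paper also re-checks each certificate under all $16$ $A$--$B$ patterns, which Lemma~\ref{lem:localcomplete} makes redundant.

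Your contingency plans are not needed, since every orbit does have a certificate. The one via Lemma~\ref{lem:puleo44} could not work anyway: the link at $u$ is connected and not in $\WKE$, so its only nonempty union of components is the whole link. In any case, an edge-set reduction would not give the reducibility of $\{u,v\}$ that the proposition asserts.
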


\emph{Verification.} There are $2^{15}=32{,}768$ pairs $(\kappa,\lambda)$; of
these, exactly $4{,}667$ give a connected non-$\WKE$ link. The number of
$\Gamma$-orbits of triples satisfying (A1) is $3{,}286$; imposing (A2)
leaves exactly $1{,}144$. The per-core breakdown is:

\begin{center}
\begin{tabular}{lrrr}
\hline
core $G[C]$ & labelled sides & orbits under (A1) & orbits under (A1)+(A2)\\
\hline
$4K_1$        &   0 &   0 &   0\\
$K_2\sqcup 2K_1$ & 0 &   0 &   0\\
$P_3\sqcup K_1$ &  45 & 192 & 150\\
$K_{1,3}$     &  73 & 187 & 113\\
$K_3\sqcup K_1$ &  60 & 124 &  25\\
$2K_2$        &  21 &  17 &  17\\
$P_4$         &  76 & 430 & 273\\
paw           & 102 & 738 & 179\\
$C_4$         & 128 & 410 & 227\\
$K_4-e$       & 142 & 891 & 139\\
$K_4$         & 160 & 297 &  21\\
\hline
\textbf{total} & & \textbf{3286} & \textbf{1144}\\
\hline
\end{tabular}
\end{center}

The accompanying catalogue contains an explicit pair $(S,X)$ for each of the
$1{,}144$ canonical orbit representatives. The checker re-derives the whole
orbit structure, confirms that the certificate keys are exactly the derived
orbit set, and, for each certificate and each of the $16$ possible patterns of
$A$--$B$ edges, reconstructs the ten-vertex graph and verifies conditions
(i)--(iii) of Corollary~\ref{cor:localcriterion} by direct edge incidence.
Thus it performs $1{,}144\times16=18{,}304$ literal verifications. The
certificates were originally found by an integer-constraint search; no
property of that search is used, only its output, which is checked
independently. \hfill$\square$

Proposition~\ref{prop:codeg4} is a finite classification with a certificate
for each class, and we do not provide a human-readable proof of it. To indicate
what the certificates look like, here is one valid representative witness for the core $K_4$
with $a_0$ complete to $C$, $a_0a_1\in E$, $b_0$ complete to $C$,
$b_0b_1\in E$ and no other side edge:
\[
  S=\{\,uvc_0,\ uc_1c_3,\ uc_2a_0,\ vc_2c_3,\ vb_0b_1,\ c_0c_1c_2\,\},
\]
\[
  X=\{\,uv,\ uc_3,\ ua_0,\ vc_3,\ vb_0,\
        c_0c_1,c_0c_2,c_1c_2,c_1c_3,c_2c_3,\ c_2a_0,\ b_0b_1\,\}.
\]
Here $|X|=12=2|S|$, so there is no slack, and the packing uses the triangle
$c_0c_1c_2$, which lies wholly inside the core and meets neither hub. This is
a valid witness for the orbit with key $(63,271,271)$, but it is not the
certificate stored for that orbit; the stored record uses seven triangles.
The catalogue itself contains $217$ certificates using a triangle that avoids
both hubs, including $22$ using a triangle wholly inside $C$, so it is not
confined to the triangles available to the template of
Section~\ref{sec:template}. Every stored certificate is tight,
$|X|=2|S|$; whether witnesses with slack exist was not part of the search.

The $2{,}142$ orbits satisfying (A1) but not (A2) cannot arise from a
$7$-regular graph (Lemma~\ref{lem:budget}) and are not catalogued.

\section{Proof of Theorems~\ref{thm:main} and \ref{thm:regular}}
\label{sec:proof}

\begin{proof}[Proof of Theorem~\ref{thm:regular}]
Suppose not. By additivity of $\nu$ and $\tau$ over components, and since
every component of a $7$-regular graph is connected and $7$-regular, there is
a connected $7$-regular $G$ with $\tau(G)>2\nu(G)$; this is the standing
hypothesis of Section~\ref{sec:minimal}.

By Corollary~\ref{cor:trichotomy} there is an edge $uv\in E(G)$ with
$\cod(uv)\in\{4,5,6\}$. By Lemma~\ref{lem:linkstructure} both links
$G[\N(u)]$ and $G[\N(v)]$ are connected and not in $\WKE$. Now:
\begin{itemize}
  \item if $\cod(uv)=6$, Proposition~\ref{prop:codeg6} makes $\{u,v\}$
    reducible;
  \item if $\cod(uv)=5$, Proposition~\ref{prop:codeg5} makes $\{u,v\}$
    reducible;
  \item if $\cod(uv)=4$, Proposition~\ref{prop:codeg4} makes $\{u,v\}$
    reducible.
\end{itemize}
In every case $G$ has a reducible vertex set, contradicting
Lemma~\ref{lem:norreducible}.
\end{proof}

\begin{proof}[Proof of Theorem~\ref{thm:main}]
Proposition~\ref{prop:AtoAprime} and Theorem~\ref{thm:regular}.
\end{proof}

\section{Sharpness}\label{sec:sharp}

Tuza already observed that graphs whose blocks are copies of $K_4$, $K_5$ or
$K_2$ satisfy $\tau=2\nu$, with $\nu$ arbitrarily large
\cite[pp.~373--374]{Tuza1990}. We record the $K_4$-block calculation because
its degree bound is what is needed here.

\begin{lemma}\label{lem:k4k5}
$\nu(K_4)=1$, $\tau(K_4)=2$, $\nu(K_5)=2$ and $\tau(K_5)=4$.
\end{lemma}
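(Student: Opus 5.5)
For each of the four identities I would give a lower bound on $\nu$ by exhibiting a packing and an upper bound on $\tau$ by exhibiting a transversal. The matching bounds then come from counting: each triangle uses three edges, which bounds $\nu$ from above, and a transversal must leave a triangle-free graph, which bounds $\tau$ from below through Mantel's theorem.

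For $K_4$: any two triangles of $K_4$ share exactly two vertices, hence an edge, so $\nu(K_4)=1$. For $\tau$, deleting a single edge $xy$ leaves the triangle on the other vertex pair together with whichever of $x,y$ remains; more directly, $K_4-e$ still has $5$ edges, while a triangle-free graph on $4$ vertices has at most $\lfloor 16/4\rfloor=4$ edges. Hence $\tau(K_4)\ge2$. Deleting a perfect matching leaves $C_4$, which is triangle-free, so $\tau(K_4)=2$.

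For $K_5$: a packing uses $3\nu$ of the $10$ edges. Each vertex has degree $4$ and lies in at most $2$ packed triangles, so $3\nu\le 2\cdot5$, giving $\nu\le3$. To rule out $\nu=3$, note that three edge-disjoint triangles would use $9$ edges, so the vertex degrees in the union sum to $18$ with each degree even and at most $4$. That forces four vertices of degree $4$ and one of degree $2$, so exactly one edge is left unused. But the unused edge must join two vertices of leftover degree $1$ each, and with one edge left the leftover degrees $4-d_i$ are all even: four vertices have leftover $0$ and one has leftover $2$, which is impossible for a single edge. Hence $\nu(K_5)\le2$, and two triangles sharing one vertex, such as $123$ and $145$, give $\nu(K_5)=2$. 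For $\tau$, Mantel gives at most $\lfloor 25/4\rfloor=6$ edges in a triangle-free graph on $5$ vertices, so $\tau(K_5)\ge 10-6=4$. Deleting the complement of $K_{2,3}$, which is $K_2\sqcup K_3$ with $4$ edges, leaves the triangle-free graph $K_{2,3}$, so $\tau(K_5)=4$.

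The only step needing any care is the parity argument excluding $\nu(K_5)=3$. If that argument feels fiddly, I would fall back on a direct case check: two edge-disjoint triangles in $K_5$ must share exactly one vertex, so they cover all five vertices. The six leftover edges then form $K_{2,2}$ plus two edges at the shared vertex, that is, a $C_4$ together with a vertex adjacent to nothing in it. That leftover graph is triangle-free, so no third triangle fits.
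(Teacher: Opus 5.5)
Your proof is correct and follows the paper's route. You use the same explicit witnesses ($C_4$ after deleting a perfect matching of $K_4$, $K_{2,3}$ after deleting $K_2\sqcup K_3$ from $K_5$, and the packing $123,145$), Mantel's bound for $\tau(K_5)\ge4$, and the same parity obstruction to $\nu(K_5)=3$. The paper phrases that obstruction as $K_5-e$ having two odd-degree vertices; you phrase it as the single unused edge needing odd leftover degrees.

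There is one slip in your optional fallback. The triangles $123,145$ use six edges and leave only four: the $C_4$ on $\{2,3\}\times\{4,5\}$, with the shared vertex $1$ isolated. There are not ``six leftover edges'' with two at the shared vertex, though the conclusion you draw from the leftover graph is right.
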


\begin{proof}
$K_4$ has four triangles, and any two of them share an edge, so $\nu(K_4)=1$;
deleting one edge leaves a triangle, and deleting a perfect matching leaves
$C_4$, which is triangle-free, so $\tau(K_4)=2$. The values for $K_5$ are
also elementary. The triangles $123$ and $145$ are edge-disjoint. Three
edge-disjoint triangles would use nine edges, so their union would be
$K_5-e$; this is impossible because a union of triangles has even degree at
every vertex, whereas $K_5-e$ has two vertices of degree three. Hence
$\nu(K_5)=2$. By Mantel's theorem a triangle-free graph on five vertices has
at most six edges, so at least four edges must be deleted from $K_5$.
Conversely, deleting the four edges inside the two parts of a $2+3$ partition
leaves $K_{2,3}$, and therefore $\tau(K_5)=4$.
\end{proof}

\begin{lemma}\label{lem:k4blocks}
Let $G$ be a connected graph all of whose blocks are copies of $K_4$, and let
$k$ be the number of blocks. Then $\nu(G)=k$ and $\tau(G)=2k$.
Moreover $\Delta(G)\le 7$ if and only if no vertex of $G$ lies in more than two
blocks, in which case $\Delta(G)\le 6$.
\end{lemma}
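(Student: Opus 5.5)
The plan is to use three facts about graphs whose blocks are all $K_4$. The triangles of such a graph all lie inside blocks. Edges of different blocks are disjoint. And a triangle transversal of $G$ is exactly a union of transversals of the individual blocks. The first fact holds because a triangle is $2$-connected, so it lies in a single block. Then $\nu$ and $\tau$ split over blocks. For $\nu$: a packing restricts to a packing in each block, and packings in distinct blocks are automatically edge-disjoint, so $\nu(G)=\sum_B\nu(B)=k\cdot\nu(K_4)=k$. For $\tau$: a set $Y$ is a transversal of $G$ if and only if $Y\cap E(B)$ is a transversal of $B$ for every block $B$, so $\tau(G)=\sum_B\tau(B)=2k$. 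Both values come from Lemma~\ref{lem:k4k5}.

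For the degree statement, I would compute $\dg(x)$ directly. Each block containing $x$ is a copy of $K_4$, so it contributes exactly three edges at $x$. Distinct blocks share at most one vertex, so these neighbour sets are disjoint. Hence $\dg(x)=3\,b(x)$, where $b(x)$ is the number of blocks containing $x$. Therefore $\Delta(G)\le7$ holds iff $3b(x)\le7$ for all $x$, iff $b(x)\le2$ for all $x$, and in that case $\dg(x)\le 6$ everywhere. The reverse implication is immediate.

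I expect no real obstacle. The one point needing care is justifying that every triangle lies in a single block and that neighbourhoods in distinct blocks at $x$ are disjoint. Both follow from the standard block facts: blocks are maximal $2$-connected subgraphs or bridges, and two blocks meet in at most one cut vertex. I would state these briefly rather than reprove them.
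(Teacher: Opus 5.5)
Your proposal is correct and takes the same approach as the paper: every triangle lies in a single block and the blocks are edge-disjoint, so $\nu$ and $\tau$ add over blocks via Lemma~\ref{lem:k4k5}, and a vertex in $r$ blocks has degree $3r$, which gives the degree statement. Your added justification, that the neighbour sets contributed at $x$ by distinct blocks are disjoint because two blocks share at most one vertex, is a detail the paper leaves implicit.
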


\begin{proof}
Every triangle of $G$ is $2$-connected, hence contained in a single block, and
distinct blocks are edge-disjoint. Therefore both a maximum packing and a
minimum transversal decompose blockwise, giving
$\nu(G)=\sum\nu(K_4)=k$ and $\tau(G)=\sum\tau(K_4)=2k$ by
Lemma~\ref{lem:k4k5}. A vertex lying in exactly $r$ blocks has degree $3r$; so
$\Delta(G)\le7$ forces $r\le2$ for all vertices, and conversely $r\le 2$ gives
$\Delta(G)\le6$.
\end{proof}

\begin{proof}[Proof of Proposition~\ref{prop:sharpintro}]
$K_4$ and $K_5$ are handled by Lemma~\ref{lem:k4k5}; their maximum degrees
are $3$ and $4$. For arbitrary $k\ge1$, glue $k$ copies of $K_4$ in a
path, identifying one vertex of the $i$-th copy with one vertex of the
$(i+1)$-st, all identifications at distinct vertices. Every vertex lies in at
most two blocks, so $\Delta\le6$, and Lemma~\ref{lem:k4blocks} gives
$\nu=k$, $\tau=2k$; call this graph $G_6$.

Pendant edges lie in no triangle and therefore change neither $\nu$ nor
$\tau$. If $k\ge2$, attach one pendant edge at a cut vertex of $G_6$, which
has degree six; the resulting connected graph $G_7$ has maximum degree seven.
If $k=1$, attach four pendant edges to one vertex of $K_4$. This again gives a
connected graph $G_7$ with maximum degree seven and $(\nu,\tau)=(1,2)$.
Thus the two claimed families exist. Since $\tau=2\nu$ on these graphs, no bound
$\tau\le c\nu$ with $c<2$ and no strict bound $\tau<2\nu$ can hold on the
class $\Delta\le 7$.
\end{proof}

Theorem~\ref{thm:main} is therefore best possible as an inequality. The
connected examples attain equality for every positive value of $\nu$.
Consequently an inequality $\tau\le2\nu-f(\nu)$ cannot hold on this class if
$f(k)>0$ for even one positive integer $k$.
For planar graphs, Cui, Haxell and Ma characterized all graphs attaining
$\tau=2\nu$ \cite{CuiHaxellMa2009}; the examples above are planar instances
of their extremal class.

\section{Concluding remarks}\label{sec:open}

The proof uses non-$\WKE$ links twice. First, the seven-vertex census forces
an incident edge of codegree at least four. Second, the local-transfer lemma
allows reducibility of that edge's endpoints to be checked in a graph of at
most ten vertices. The first step suggests looking for edge-pair reductions
beyond degree seven, and the second remains valid without change. The finite
classifications do not.

\begin{question}\label{q:mad8}
Does $\mad(G)<8$ imply $\tau(G)\le2\nu(G)$?
\end{question}

Theorem~\ref{thm:main} settles the maximum-degree-seven portion of the boundary
$\mad(G)\le7$, but not graphs of maximum average degree seven that have
vertices of larger degree. The same first step suggests asking whether the
stronger sparsity statement above is tractable. The finite classifications of
Sections~\ref{sec:codeg6}--\ref{sec:codeg4} depend throughout on
$7$-regularity: through the degree budget of Lemma~\ref{lem:budget}, the
identity $|A|=|B|=6-c$, and the seven-vertex link census of
Lemma~\ref{lem:B1}. Every one of them would have to be redone, and we make no
prediction about whether the method survives.

\begin{question}\label{q:human}
Is there a human-readable proof of Proposition~\ref{prop:codeg4}?
\end{question}

Sections~\ref{sec:codeg6} and \ref{sec:codeg5} are structured proofs whose
computational content is the verification of one template inequality over a
small explicit family, together with a short exceptional case settled by a
Fano configuration. Section~\ref{sec:codeg4} has no comparable structural
argument: it is a catalogue of $1{,}144$ certificates. A conceptual proof of
Proposition~\ref{prop:codeg4} would be the natural starting point for an
attack on Question~\ref{q:mad8}.

\begin{question}\label{q:equality}
Which graphs of maximum degree at most seven satisfy $\tau=2\nu$?
\end{question}

The planar case has the exact characterization of Cui, Haxell and Ma
\cite{CuiHaxellMa2009}. It would be interesting to know what replaces their
$K_4$-based extremal class without planarity, and in particular how much the
$K_5$ blocks in Tuza's general equality family contribute. In particular,
does any $7$-regular graph satisfy $\tau=2\nu$? The finite checks in this paper
do not address that question.

\begin{question}\label{q:k5free}
Does every $K_5$-free graph satisfy $\tau\le2\nu$?
\end{question}

If $G$ is $K_5$-free and an edge has at least three common neighbours, then
its core $G[C]$ is triangle-free: a triangle in $C$, together with the two
hubs, would form a $K_5$. In particular the two complete cores --- the only
configurations at codegrees five and six that defeat the hub-only template ---
cannot occur. At codegree four it also removes all four core types containing
a triangle, accounting for $364$ of the $1{,}144$ catalogued orbits. Whether
that observation can replace the degree hypothesis is unclear. The stronger
restriction to $K_4$-free graphs is itself unresolved in general; work of
Munaro treats several subclasses and shows that the factor two would remain
essentially tight even there \cite{Munaro2018}.

Tuza's conjecture remains open, and Theorem~\ref{thm:main} does not improve
Haxell's universal bound $\tau\le(66/23)\,\nu$ \cite{Haxell1999}. Its narrower
point is that the equality boundary of Puleo's sparsity theorem can be crossed,
which motivates Question~\ref{q:mad8}.

\section*{Data and code availability}
The certificate catalogue and verification programs are included with the
arXiv submission as ancillary files and are maintained in the companion
repository at
\url{https://github.com/agupta/tuza-maximum-degree-seven}.

\section*{Acknowledgment of generative-AI assistance}
Anthropic Claude Code (Claude 5 family) and OpenAI Codex (GPT-5.6 family) were
used extensively for proof exploration, software development, exact
computational checks, literature discovery, and drafting and editing the
manuscript. The author selected the arguments and methods, checked the cited
sources and reported computations, and takes full responsibility for the
content. These systems are not authors or independent guarantors of
correctness.

\appendix

\section{The verification programs}\label{app:programs}

The four checkers in \texttt{tests/} use Python~3.10 or later and only its
standard library. They use exact integer and Boolean arithmetic throughout;
there is no floating point, no numerical tolerance and no external solver in
the verification path. Each prints a line beginning
\texttt{PASS} for every assertion it discharges and exits with status $0$ if
and only if all of them hold. They take no arguments, read no input other
than the certificate file, and use no network. The optional finder in
\texttt{src/} uses OR-Tools CP-SAT; it is not part of the verification path.

\medskip
\noindent\textbf{\texttt{tests/check\_codegree4.py}}
verifies Lemma~\ref{lem:B1} by enumerating all $2^{21}$ labelled
seven-vertex graphs; derives the $4{,}667$ admissible
labelled core/side pairs, the $3{,}286$ orbits under (A1) and the $1{,}144$
under (A1)+(A2); checks that the keys of
\path{data/B_codegree4_certificates.json} are exactly the derived orbit
set; and verifies all $1{,}144$ certificates under all $16$ ambient
$A$--$B$ patterns, i.e.\ $18{,}304$ literal checks of conditions (i)--(iii)
of Corollary~\ref{cor:localcriterion}. It also asserts the reported certificate
statistics and checks the displayed witness for key $(63,271,271)$ under all
$16$ patterns, separately from the stored record. The ``labelled sides'' column in the
table for Proposition~\ref{prop:codeg4} counts, for one fixed labelled core
representative, the side masks giving a connected non-$\WKE$ link; summing
over all $64$ labelled cores with multiplicity gives $4{,}667$.

The certificate file is self-describing. Its vertex labels are
$0=u$, $1=v$, $2,3,4,5=C$, $6,7=A$ and $8,9=B$. Each record gives the three
bit masks $(\kappa,\lambda,\rho)$ together with a list \texttt{packing} of
triangles and a list \texttt{cover} of edges. The checker reconstructs the
local graph from those masks, verifies that the packing is edge-disjoint and
that the cover satisfies the budget, and then checks the two incidence
conditions of Corollary~\ref{cor:localcriterion}. The JSON header records the
bit order used for the core and side masks.

\medskip
\noindent\textbf{\texttt{tests/check\_codegree5.py}}
verifies Propositions~\ref{prop:template5} and
\ref{prop:exceptional5}. It also checks every ordered boundary pair for each of
the $548$ labelled cores admitting a connected non-$\WKE$ link, with the
ambient edge $ab$ both absent and present: $548\times32\times32\times2=
1{,}122{,}304$ local states.

\medskip
\noindent\textbf{\texttt{tests/check\_codegree6.py}}
verifies Proposition~\ref{prop:template6} over all $2^{15}$ labelled cores
and all $R\subseteq E(H)$ --- including the failure at $K_6$, the intrinsic
$155$-core tightness count, and the Fano witness of Lemma~\ref{lem:fano6}.

\medskip
\noindent\textbf{\texttt{tests/check\_sharpness.py}}.
Recomputes $\nu$ and $\tau$ by complete search for the graphs named in
Section~\ref{sec:sharp}, together with the additional dense anchors $K_7$ and
$K_8$. This script is a reference check; no step of the proof depends on it.

\medskip
\noindent\textbf{\texttt{src/B\_codegree4\_search.py}}.
The program that originally \emph{found} the codegree-four certificates. It is
supplied as historical provenance, not as a supported reproducible
environment: no OR-Tools version is pinned. Nothing in the proof depends on
it. Only the emitted certificates matter, and those are re-verified by
\texttt{tests/check\_codegree4.py} without reference to how they were
produced.

\medskip
The checkers assert that their live derivations reproduce hard-coded expected
data: three tables in \path{tests/check_codegree4.py}, two expected
collections in \path{tests/check_codegree5.py}, and one best-margin
distribution in \path{tests/check_codegree6.py}. These literals guard
against drift; they are not independent re-derivations. In the codegree-four
checker the total orbit count and exact certificate coverage are derived
without hard-coded per-core input. The checkers verify the finite local statements
(Lemma~\ref{lem:B1}, Propositions~\ref{prop:template6},
\ref{prop:template5}, \ref{prop:exceptional5} and \ref{prop:codeg4}); they do
not verify Theorem~\ref{thm:regular} or Theorem~\ref{thm:main}, whose
assembly in Section~\ref{sec:proof} is a human argument.

\raggedright


\begin{thebibliography}{99}

\bibitem{AparnaBujtasTuza2012}
S.~Aparna Lakshmanan, Cs.~Bujt\'as, Zs.~Tuza,
\emph{Small edge sets meeting all triangles of a graph},
Graphs Combin.\ \textbf{28} (3) (2012) 381--392;
doi:10.1007/s00373-011-1048-8.

\bibitem{BotlerFG2021}
F.~Botler, C.~G.~Fernandes, J.~Guti\'errez,
\emph{On Tuza's conjecture for triangulations and graphs with small
treewidth}, Discrete Math.\ \textbf{344} (4) (2021), 112281;
doi:10.1016/j.disc.2020.112281; arXiv:2002.07925.

\bibitem{CuiHaxellMa2009}
Q.~Cui, P.~E.~Haxell, W.~Ma,
\emph{Packing and covering triangles in planar graphs},
Graphs Combin.\ \textbf{25} (6) (2009) 817--824;
doi:10.1007/s00373-010-0881-5.

\bibitem{Haxell1999}
P.~E.~Haxell,
\emph{Packing and covering triangles in graphs},
Discrete Math.\ \textbf{195} (1--3) (1999) 251--254;
doi:10.1016/S0012-365X(98)00183-6.

\bibitem{KahnPark2022}
J.~Kahn, J.~Park,
\emph{Tuza's conjecture for random graphs},
Random Structures Algorithms \textbf{61} (2) (2022) 235--249;
doi:10.1002/rsa.21057; arXiv:2007.04351.

\bibitem{Krivelevich1995}
M.~Krivelevich,
\emph{On a conjecture of Tuza about packing and covering of triangles},
Discrete Math.\ \textbf{142} (1--3) (1995) 281--286;
doi:10.1016/0012-365X(93)00228-W.

\bibitem{Munaro2018}
A.~Munaro,
\emph{Triangle packings and transversals of some $K_4$-free graphs},
Graphs Combin.\ \textbf{34} (4) (2018) 647--668;
doi:10.1007/s00373-018-1903-y.

\bibitem{Puleo2015}
G.~J.~Puleo,
\emph{Tuza's conjecture for graphs with maximum average degree less than 7},
European J.\ Combin.\ \textbf{49} (2015) 134--152;
doi:10.1016/j.ejc.2015.03.006; arXiv:1308.2211v3.

\bibitem{Tuza1981}
Zs.~Tuza,
\emph{Conjecture}, in: A.~Hajnal, L.~Lov\'asz, V.~T.~S\'os (Eds.),
Finite and Infinite Sets (Proc.\ Sixth Hungarian Combinatorial Colloquium,
Eger, 1981), Colloq.\ Math.\ Soc.\ J\'anos Bolyai \textbf{37},
North-Holland, Amsterdam, 1984, p.~888.

\bibitem{Tuza1990}
Zs.~Tuza,
\emph{A conjecture on triangles of graphs},
Graphs Combin.\ \textbf{6} (4) (1990) 373--380;
doi:10.1007/BF01787705.

\end{thebibliography}
\end{document}